\newtheorem{theorem}{Theorem} \theoremstyle{definition}
\newtheorem{lemma}[theorem]{Lemma}
\theoremstyle{remark}
\newtheorem{remark}{Remark}
\numberwithin{remark}{section}
\numberwithin{theorem}{section}
\numberwithin{equation}{section}
\numberwithin{definition}{section}
\newcommand{\tri}{\mathcal{T}}
\newcommand{\triH}{\tri_H}
\newcommand{\diam}{\operatorname*{diam}}
\newcommand{\kernel}{\operatorname*{kernel}}
\newcommand{\support}{\operatorname*{supp}}
\newcommand{\N}{\mathcal{N}}
\newcommand{\IH}{\mathcal{I}_H}
\newcommand{\Cint}{C_{\IH}}
\newcommand{\Vf}{V_{\operatorname*{f}}}
\newcommand{\VHFE}{V^{\text{FE}}_{\operatorname*{H}}}
\newcommand{\VhFE}{V^{\text{FE}}_{\operatorname*{h}}}
\newcommand{\VHLOD}{V^{\text{LOD}}_{\operatorname*{H}}}
\newcommand{\VHLODl}{V^{\text{LOD}}_{\operatorname*{H},\ell}}
\newcommand{\XHLOD}{X^{\text{LOD}}_{\operatorname*{H}}}
\newcommand{\XHLODl}{X^{\text{LOD}}_{\operatorname*{H},\ell}}
\newcommand{\Pf}{\mathcal{P}_{\operatorname*{f}}}
\newcommand{\Pc}{\mathcal{P}_{\operatorname*{c}}}
\renewcommand{\H}{\text{H}}
\title[GFEM for quadratic eigenvalue problems]{Generalized finite element methods for quadratic eigenvalue problems}
\author{Axel M\r{a}lqvist}
\thanks{A. M\r{a}lqvist is supported by the Swedish Research Council and the Swedish foundation of strategic research.}
\address{Department of Mathematics, Chalmers University of Technology and University of Gothenburg}
\author{Daniel Peterseim}
\thanks{D. Peterseim is supported by the Hausdorff Center for Mathematics Bonn and by Deutsche Forschungsgemeinschaft in the Priority Program 1748 "Reliable simulation techniques in solid mechanics. Development of non-standard discretization methods, mechanical and mathematical analysis" under the project
"Adaptive isogeometric modeling of propagating strong discontinuities in heterogeneous materials".}
\address{Insitute for Numerical Simulation, Rheinische Friedrich-Wilhelms-Universit{\"a}t Bonn}
\date{\today}
\keywords{quadratic eigenvalue problem, finite element, localized orthogonal decomposition}
\subjclass[2000]{65N30, 65N25, 65N15}
\begin{document}

\begin{abstract}
We consider a large-scale quadratic eigenvalue problem (QEP), formulated using P1 finite elements on a fine scale reference mesh. This model describes damped vibrations in a structural mechanical system. In particular we focus on problems with rapid material data variation, e.g., composite materials. We construct a low dimensional generalized finite element (GFE) space based on the localized orthogonal decomposition (LOD) technique. The construction involves the (parallel) solution of independent localized linear Poisson-type problems. The GFE space is then used to compress the large-scale algebraic QEP to a much smaller one with a similar modeling accuracy. The small scale QEP can then be solved by standard techniques at a significantly reduced computational cost. We prove convergence with rate for the proposed method and numerical experiments confirm our theoretical findings.
\end{abstract}

\maketitle

\section{Introduction}
Quadratic eigenvalue problems appear in various engineering disciplines. Often they are the result of finite element modeling rather than established partial differential equations. A classical example is a damped vibrating structure. For a spatially discretized structure we have,
\begin{equation}\label{e:model}
 Kz+\lambda Dz+\lambda^2 Mz=0,
\end{equation}
where $K$ is the finite element stiffness matrix, $D$ is the damping matrix, $M$ is the mass
matrix, $\lambda$ is the eigenvalue, and $z$ is the corresponding eigenvector. Assuming all matrices to be real there are $2n$ finite eigenvalues ($n$ being the dimension of the matrices) that are real or complex conjugate. Furthermore, if $z$ is a right eigenvector of $\lambda$ then $\bar{z}$ is a right eigenvector of $\bar{z}$.
If $D$ is also symmetric the left and right eigenvectors are equal. See \cite{TiMe01} for a more detailed discussion on how the spectrum and eigenspaces depend on properties of the matrices, and for an extensive overview of applications see \cite{TiMe01,MeVo04,BeHiMeSc13,HwLiMe03}.

We will use a specific finite element realization of the more general formulation presented in Equation~\eqref{e:model} as a starting point for this work. We seek an eigenvalue $\lambda\in\mathbb{C}$ and eigenfunction $u\in \VhFE$, where $\VhFE$ is a finite element space, such that,
\begin{equation*}
(\kappa \nabla u,\nabla v)+\lambda d(u,v)+\lambda^2 (u,v)=0,\quad\forall v\in\VhFE,
\end{equation*}
where the three bilinear forms in the left hand side corresponds to the three matrices $K$, $D$, and $M$ in Equation~\eqref{e:model}. In particular we are interested in problems where the coefficient $\kappa$ varies rapidly in space modeling, e.g., rapid material data variation. It is well known that in order to capture the correct  behavior of the solution, the finite element space $\VhFE$ has to be large enough to resolve the data variations \cite{BaOs99}. In this paper, we will not question the properness of this model. We assume that the parameter $h$ was carefully chosen in an earlier modeling or discretization step.

There are various models for the damping. The simplest one is proportional damping where $D=\alpha_0 K+\alpha_1 M$ with $\alpha_0,\alpha_1\in\mathbb{R}$ using the matrix notation. In this case the eigenmodes actually coincide with the eigenmodes of the linear generalized eigenvalue problem $Kx=\hat{\lambda} Mx$ but with different eigenvalues. If a structure is made up of different components (or materials) each of the parts may be modeled using proportional damping with different constants. The full structure will then have spatially varying damping parameters $\alpha_0$, $\alpha_1$ and the damping will therefore not be proportional. In this work we will treat a general abstract damping bilinear form $d$ but in the numerical experiments we will focus on mass and stiffness type damping with spatially varying parameters.

Numerical algorithms for solving quadratic eigenvalue problems are often based on linearization  \cite{MeVo04,HwLiMe03,BeHiMeSc13}. This technique results in a linear, possibly non-symmetric, generalized eigenvalue problem. In this approach, the size of the system gets doubled which, in the presence of multiscale features (since the initial finite element space has to be very large) is a drawback. Another drawback might be that the  underlying structure of the quadratic eigenvalue problem can be lost. There are also numerical schemes that can be applied directly to the quadratic eigenvalue problem without linearization, e.g., \cite{TiMe01}. In this paper, we will work with the linearized system.

While the literature in this context of numerical linear algebra is rich, results on error analysis and convergence of finite element approximations to quadratic eigenvalue problems are rather limited \cite{BeDuRoSo00}. To our best knowledge, the literature on GFEMs or multiscale methods for eigenvalue problems  is limited to \cite{MaPe13} which treats the linear case. Since linearization is a natural approach also when analyzing finite element approximations to quadratic eigenvalue problems, the literature on non-symmetric generalized eigenvalue problems is very relevant. Standard references include \cite{Ka76,BaOs91,Bo10} and for non-compact operators \cite{DeNaRa78a,DeNaRa78b}. These works gives a mathematical foundation  for the convergence analysis presented in this paper.

This paper neither aims to improve existing linear algebra techniques for solving quadratic eigenvalue problems nor to invent new ones. The aim is to reduce the size of the system before applying any of the standard solvers and, thereby, to speed-up the overall computation significantly. This will be achieved by constructing a computable low dimensional subspace of $\VhFE$, based on the framework of localized orthogonal decomposition (LOD) \cite{MaPe11,MaPe13}. The space captures the main features of the eigenspaces and, in particular, the effect of the rapid oscillations induced by the rough diffusion coefficient $\kappa$. Given this low dimensional space we can solve the quadratic eigenvalue problem at a greatly reduced computational cost while the accuracy is largely preserved. Under weak assumptions on the damping, the error analysis shows that the GFEM approximation is very accurate (in the sense of super convergence) when compared to the reference finite element solution, independent of the variations in the multiscale data. The proofs are based on the classical theory for non-symmetric eigenvalue problems presented in \cite{BaOs91}. Numerical examples confirm our theoretical findings.

The remaining part of this paper is structured as follows. In Section~\ref{s:fem} we present the model problem and its linearization. Section~\ref{s:lod} is devoted to the proposed numerical method. In Section~\ref{s:error}, we will derive a convergence result for the approximation. Section~\ref{s:num} shows numerical experiments and Section~\ref{s:conclusion} presents some final conclusions.

\section{Finite element spaces, linearization, and problem formulation}\label{s:fem}
In this section we introduce finite element spaces, formulate the discrete model problem, linearize, and finally arrive at a detailed problem formulation.

\subsection{Conforming finite element spaces}
 Let $\tri_h,\tri_H$ denote regular finite element meshes of a computational domain $\Omega\subset\mathbb{R}^d$, $d=1,2,3$, into closed simplices with mesh-size functions $0<h<H\in L^\infty(\Omega)$. If no confusion seems likely, we use $h$ and $H$ also to denote the maximal mesh sizes. The first-order fine and coarse conforming finite element spaces are
\begin{gather}\label{e:couranta}
\VhFE:=\{v\in V\;\vert\;\forall T\in\tri_h,v\vert_T \text{ is polynomial of degree}\leq 1\},\\
\VHFE:=\{v\in V\;\vert\;\forall T\in\tri_H,v\vert_T \text{ is polynomial of degree}\leq 1\}.
\end{gather}
We assume that $\VHFE\subset \VhFE$. By $\N_h$ and $\N_H$ we denote the set of interior vertices of the meshes. For every vertex $z$, let $\phi_z$ and $\varphi_z$ denote the corresponding nodal basis function to $\VHFE$ and $\VhFE$ respectively.

\begin{remark} While the nestedness of spaces $\VHFE\subset \VhFE$ is rather essential for our theory, the nestedness of the underlying meshes is not. The coarse finite element space $
\VHFE$ could be any subspace of $\VhFE$ that admits a local basis $\{\tilde{\phi}_{z} \in \VhFE : z\in \N_H \}$ with $\diam{\support\phi_z}\approx H$ and $\|\nabla^k \tilde{\phi}_{z}\|_{W^{k,\infty}(\Omega)} \lesssim H^{-k}$ ($k=0,1$), and possibly further conditions such as a partition of unity property; see \cite{HaMoPe15}. The method is then also applicable in cases where the resolution of characteristic microscopic geometric features of the model requires a highly unstructured fine mesh. A prototypical construction for this scenario can be found in  \cite[Section 6.2 and 7.3]{MaPe13}.
\end{remark}

\subsection{Quasi-interpolation}
 We will use a Cl\'ement-type interpolation operator to restrict the reference mesh functions to the coarser mesh $\IH: \VhFE\rightarrow \VHFE$. The interpolant is defined in the following way. Given $v\in \VhFE$, $\IH v := \sum_{z\in\N_H}(\IH v)(z)\phi_z$ defines a Cl\'ement interpolant with nodal values
 \begin{equation*}
 (\IH v)(z):=\frac{(v, \phi_z)_{L^2(\Omega)}}{(1,\phi_z)_{L^2(\Omega)}}\quad\text{for }z\in\N_H.
 \end{equation*}
 There exists a generic  constant $\Cint$ such that for all $v\in \H^1_0(\Omega)$ and for all $T\in\triH$ it holds
\begin{equation}\label{e:interr}
  H_T^{-1}\|v-\IH v\|_{L^{2}(T)}+\|\nabla(v-\IH v)\|_{L^{2}(T)}\leq \Cint \| \nabla v\|_{L^2(\omega_T)},
\end{equation}
where $\omega_T:=\cup\{t\in\triH\;\vert\;T\cap t\neq\emptyset\}$, see \cite{CV99} for a more detailed discussion. The constant $\Cint$ depends on the shape regularity of the finite element meshes but not on the mesh sizes.

\subsection{Model problem}
We can now formulate the model problem. Find $u\in \VhFE$ and $\lambda\in\mathbb{C}$ such that
\begin{equation}\label{e:model2}
(\kappa \nabla u,\nabla v)+\lambda d(u,v)+\lambda^2 (u,v)=0,\quad\forall v\in\VhFE.
\end{equation}
\paragraph{{\bf Assumption A}}
We assume,
\begin{equation*}
\kappa\in[\kappa_1,\kappa_2]\text{ and }d\text{ to be real and bounded,}
\end{equation*}
where $0<\kappa_1\leq \kappa_2<\infty$.

The corresponding matrix form is seen in Equation~\eqref{e:model}. These different formulations are related in the following way:
$u=\sum_{z\in\mathcal{N}_h}x_z\varphi_z$, $(\kappa\nabla \varphi_z,\nabla \varphi_w)=K_{z,w}$, $d(\varphi_z,\varphi_w)=D_{z,w}$, and $(\varphi_z,\varphi_w)=M_{z,w}$.

\subsection{Linearization}
Linearization is achieved by introducing two new variables $x_1=u$ and $x_2=\lambda u$. We let $X=\VhFE\times \VhFE$, with norm $$\|(x_1,x_2)\|^2_X:=\|\sqrt{\kappa}\nabla x_1\|_{L^2(\Omega)}^2+\|x_2\|_{L^2(\Omega)}^2:=\|\sqrt{\kappa}\nabla x_1\|^2+\|x_2\|^2,$$ for all $x=(x_1,x_2)\in X$, i.e., we drop the subscript $L^2(\omega)$ when the domain $\omega=\Omega$.
We  consider the weak form: find $x\in X$ and  $\lambda\in\mathbb{C}$ such that,
\begin{equation}
a(x,y)=\lambda \,b(x,y),
\end{equation}
for all $y\in X$ where the bilinear forms $a$ and  $b$ are defined as
\begin{gather}
a\left((x_1,x_2),(y_1,y_2)\right)=k( x_1, y_1)+( x_2,y_2)\\
b\left((x_1,x_2),(y_1,y_2)\right)=-d(x_1,y_1)-(x_2,y_1) +( x_1,y_2).
\end{gather}
Here several different choices are possible. The damping can be kept in the left hand side and the relation given by variations over $y_2$ can be done using other bilinear forms or expressed in matrix wording, by any invertible matrix. Our choice is common in the literature and fits our error analysis. For further discussion see, e.g., \cite{BeHiMeSc13}.

We note that with this choice $a$ is real, bounded and coercive, and $b$ is real and bounded. Under these assumptions there are unique linear bounded operators $A:X\rightarrow X$ and $A^*:X\rightarrow X$ satisfying,
\begin{gather}\label{e:A}
a(A x,y)=b(x,y),\quad \forall y\in X,\\
a(x,A^* y)=b(x,y),\quad \forall y\in X,
\end{gather}
see \cite{BaOs91}.

The analysis in this paper considers an isolated eigenvalue $\mu$ of $A$ of algebraic multiplicity $r$. Note that if $\lambda$ is an eigenvalue of Equation~\eqref{e:model} then $\mu:=\lambda^{-1}$ is an eigenvalue of $A$. The invariant subspace corresponding to an eigenvalue $\mu$ is defined as follows. Given a circle $\Gamma\in \mathbb{C}$ in the resolvent set of  $A$ enclosing only the eigenvalue $\mu$, set
\begin{equation}\label{e:E}
E:=\frac{1}{2\pi i}\int_\Gamma (z-A)^{-1}\,dz.
\end{equation}
We note that $E:X\rightarrow X$ is a projection operator. By $R(E)$ we denote the range of the subspace projection. The elements of $R(E)$ are generalized eigenfunctions fulfilling $a(x^j,y)=\lambda b(x^j,y)+\lambda a(x^{j-1},y)$, where $x^1$ is an eigenfunction of $A$ with eigenvalue $\lambda$, see \cite{BaOs91} page 693. It is also natural to introduce the adjoint invariant subspace projection
\begin{equation}
E^*=\frac{1}{2\pi i}\int_\Gamma (z-A^*)^{-1}\,dz.
\end{equation}
\begin{remark}
The linearization can also be done on the matrix formulation of the problem, Equation~\eqref{e:model}. The resulting matrix, corresponding to the linear map $A$, is
\begin{equation}
\left[
\begin{array}{cc}
K & 0\\
0 & M\\
\end{array}
\right]^{-1}\left[
\begin{array}{cc}
-D & -M\\
M & 0\\
\end{array}
\right]=\left[
\begin{array}{cc}
-K^{-1}D & -K^{-1}M\\
I & 0\\
\end{array}
\right].
\end{equation}
\end{remark}

\section{Generalized finite element approximation}\label{s:lod}
We present a GFEM for efficient approximation of the eigenvalues and eigenspaces of the discrete model problem present in Equation~\eqref{e:model2}. We will use the two scale decomposition introduced in \cite{MaPe11,MaPe13}.

\subsection{Orthogonal decomposition}
We want to decompose $\VhFE$ into a part of same dimension as $\VHFE$ and a remainder part. To this end we first introduce the remainder space,
$$\Vf:=\kernel \bigl(\IH\bigr)\subset \VhFE,$$
representing the fine scales in the decomposition.
The orthogonalization of the decomposition with respect to the bilinear form $(\kappa\nabla\cdot,\nabla \cdot)$ yields the definition of a modified coarse space $\VHLOD$.

Given $u\in \VhFE$, define the fine scale projection operator $\Pf u\in\Vf$ as solution to
\begin{equation}\label{e:finescaleproj}
  (\kappa\nabla \Pf u,  \nabla v)=(\kappa\nabla  u, \nabla v)\quad\text{for all }v\in \Vf.
\end{equation}
The existence of $\Pf$ follows directly from the properties of $\kappa$ and $\Vf$.

\begin{lemma}[Orthogonal two-scale decomposition]\label{l:aod}
Any function $v\in V_h$ can be decomposed uniquely into $v=v_c+v_f$, where
$$v_c=\Pc v:=(1-\Pf)v\in(1-\Pf)V_H=:\VHLOD$$
and $$v_f:=\Pf v\in\Vf=\kernel\IH.$$
The decomposition is orthogonal, $( \kappa\nabla v_c, \nabla v_f)=0$.
\end{lemma}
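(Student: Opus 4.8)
The plan is to split the statement into four claims---existence of the splitting, membership $v_f\in\Vf$, coarse-space membership $v_c\in\VHLOD$, and uniqueness, together with the orthogonality---and to notice that only one of these is substantial. Given $v\in V_h$ I would simply set $v_f:=\Pf v\in\Vf$ and $v_c:=(1-\Pf)v=v-v_f$, so that $v=v_c+v_f$ holds tautologically and $v_f\in\Vf$ by the definition of $\Pf$. The defining relation \eqref{e:finescaleproj} states precisely that $(\kappa\nabla(v-\Pf v),\nabla w)=0$ for all $w\in\Vf$; taking $w=v_f$ yields the asserted orthogonality $(\kappa\nabla v_c,\nabla v_f)=0$, and more generally shows that $1-\Pf$ is the $(\kappa\nabla\cdot,\nabla\cdot)$-orthogonal projection onto $\Vf^{\perp}$, so in particular $v_c\in\Vf^{\perp}$.

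The only genuine work is to verify $v_c\in\VHLOD=(1-\Pf)V_H$, i.e. that $1-\Pf$ sends all of $V_h$ into the image of the coarse space and not merely into $\Vf^{\perp}$. I would deduce this from the algebraic splitting $V_h=V_H\oplus\Vf$. The key ingredient is that $\IH$ is injective on $V_H$: if $w\in V_H$ satisfies $\IH w=0$, then $(w,\phi_z)_{L^2(\Omega)}=0$ for every $z\in\N_H$, hence $w$ is $L^2$-orthogonal to all of $V_H=\sspan\{\phi_z\}$ and in particular to itself, forcing $w=0$. Thus $V_H\cap\Vf=V_H\cap\kernel\IH=\{0\}$; since $\IH$ is injective on the finite-dimensional space $V_H$ it maps it bijectively onto $V_H$, so $\IH\colon V_h\to V_H$ is surjective and $\dimension\Vf=\dimension V_h-\dimension V_H$. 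A dimension count then gives $V_h=V_H\oplus\Vf$. Writing an arbitrary $v\in V_h$ as $v=w+w_f$ with $w\in V_H$ and $w_f\in\Vf$, and using that $\Pf$ fixes $\Vf$ so that $(1-\Pf)w_f=0$, I obtain $v_c=(1-\Pf)v=(1-\Pf)w\in(1-\Pf)V_H=\VHLOD$.

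Uniqueness then follows formally: if $v_c+v_f=v_c'+v_f'$ with $v_c,v_c'\in\VHLOD\subseteq\Vf^{\perp}$ and $v_f,v_f'\in\Vf$, then $v_c-v_c'=v_f'-v_f\in\Vf^{\perp}\cap\Vf=\{0\}$, so both differences vanish. I expect the step in the second paragraph to be the main obstacle: the orthogonality and uniqueness are automatic once $\Pf$ is recognised as an orthogonal projection, and the real care is needed to confirm that the Cl\'ement operator $\IH$ really is surjective onto $V_H$ and invertible on $V_H$, which is exactly what makes $\Vf$ a complement of the coarse space of the correct dimension and hence places $v_c$ in $\VHLOD$ rather than only in $\Vf^{\perp}$.
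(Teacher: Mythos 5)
Your proposal is correct, and every step checks out: the orthogonality and the identity $v=v_c+v_f$ are immediate from the variational definition of $\Pf$ in \eqref{e:finescaleproj}; the fact that $\Pf$ fixes $\Vf$ (by uniqueness of the corrector problem) reduces the coarse-membership claim to the algebraic splitting $\VhFE=\VHFE\oplus\Vf$; and your injectivity argument for $\IH$ on $\VHFE$ (if $\IH w=0$ then $(w,\phi_z)_{L^2(\Omega)}=0$ for all $z\in\N_H$, hence $w\perp_{L^2}\VHFE\ni w$, so $w=0$) combined with rank--nullity delivers exactly that splitting. The uniqueness argument via $\Vf^{\perp}\cap\Vf=\{0\}$ implicitly uses coercivity of $(\kappa\nabla\cdot,\nabla\cdot)$ on $\VhFE\subset\H^1_0(\Omega)$ (Friedrichs), which holds under Assumption (A); it would be worth one sentence making that explicit. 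The comparison with the paper is easy to state: the paper offers no proof at all, only the citation ``See Lemma 3.2 in [MaPe13]'', so you have supplied a complete, self-contained argument for a statement the authors outsource. Your route is the standard one underlying that reference: the only substantial ingredient is precisely the one you isolate, namely that the Cl\'ement-type operator $\IH$, although not a projection, restricts to an isomorphism of $\VHFE$, which is what makes $\Vf=\kernel\IH$ a complement of the coarse space of the correct dimension and hence forces $(1-\Pf)\VhFE=(1-\Pf)\VHFE=\VHLOD$.
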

\begin{proof}
See Lemma 3.2 in \cite{MaPe13}.
\end{proof}

\begin{remark}
The choice of the interpolant $\IH$ affects the generalized finite element space since it affects $\Vf$. Our particular choice of $\IH$ leads to the $L^2(\Omega)$-orthogonality of $V_H$ and $\Vf$ and this can be exploited in connection with the bilinear form $b(\cdot,\cdot)$. However, other choices are possible  \cite{Pe14,BrPe14,Pe15,GaPe15}.
\end{remark}

\subsection{Construction of basis and localization}
Given the basis $\{\phi_z\}_{z\in\mathcal{N}_H}$ of $V_H$, the natural basis for $\VHLOD$ in the light of Equation~\eqref{e:finescaleproj} is given by
$$
\{\phi_z-\Pf\phi_z\}_{z\in\mathcal{N}_H};
$$
see \cite{MaPe11,MaPe13} for a detailed discussion of this construction. We note that the basis functions $\phi_z-\Pf \phi_z$ have global support in $\Omega$. However, it was proven in \cite{MaPe11} that the corrected basis functions decay exponentially  (in terms of number of coarse elements) away form the support of $\phi_z$; see Figure~\ref{fig:decay} for an illustration.
\begin{figure}[t]
\includegraphics[width=.48\textwidth]{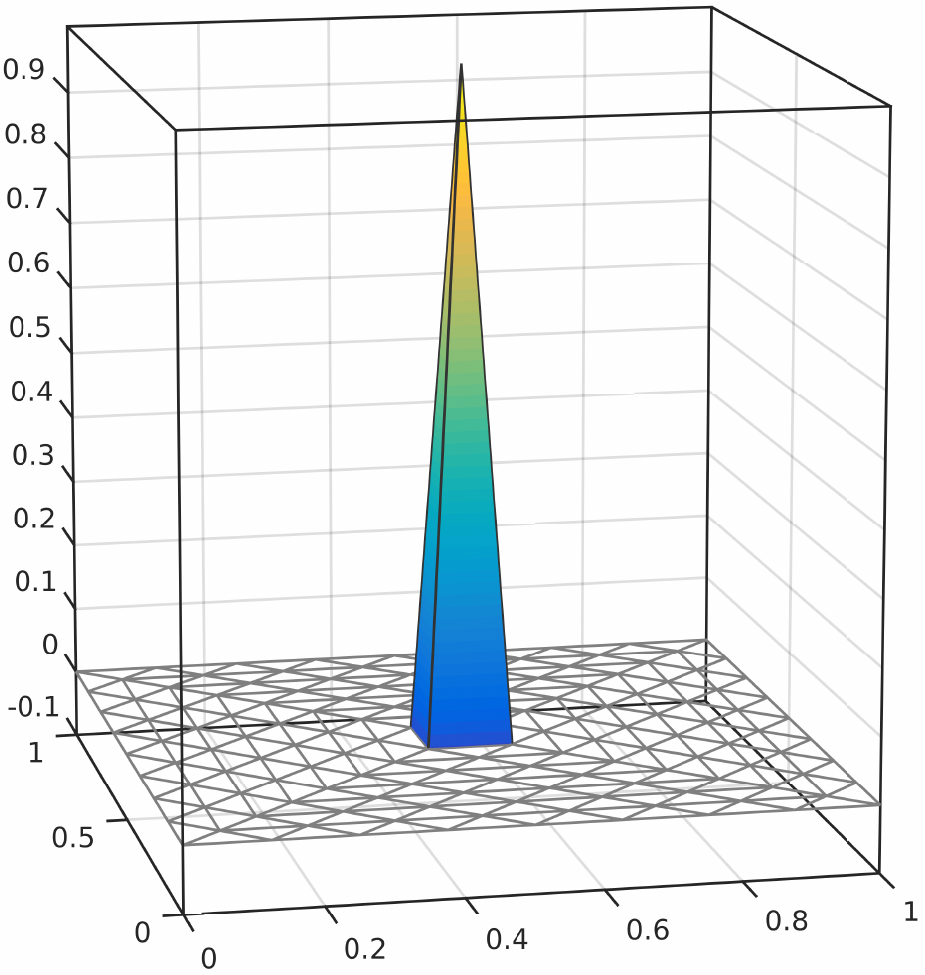}
\includegraphics[width=.48\textwidth]{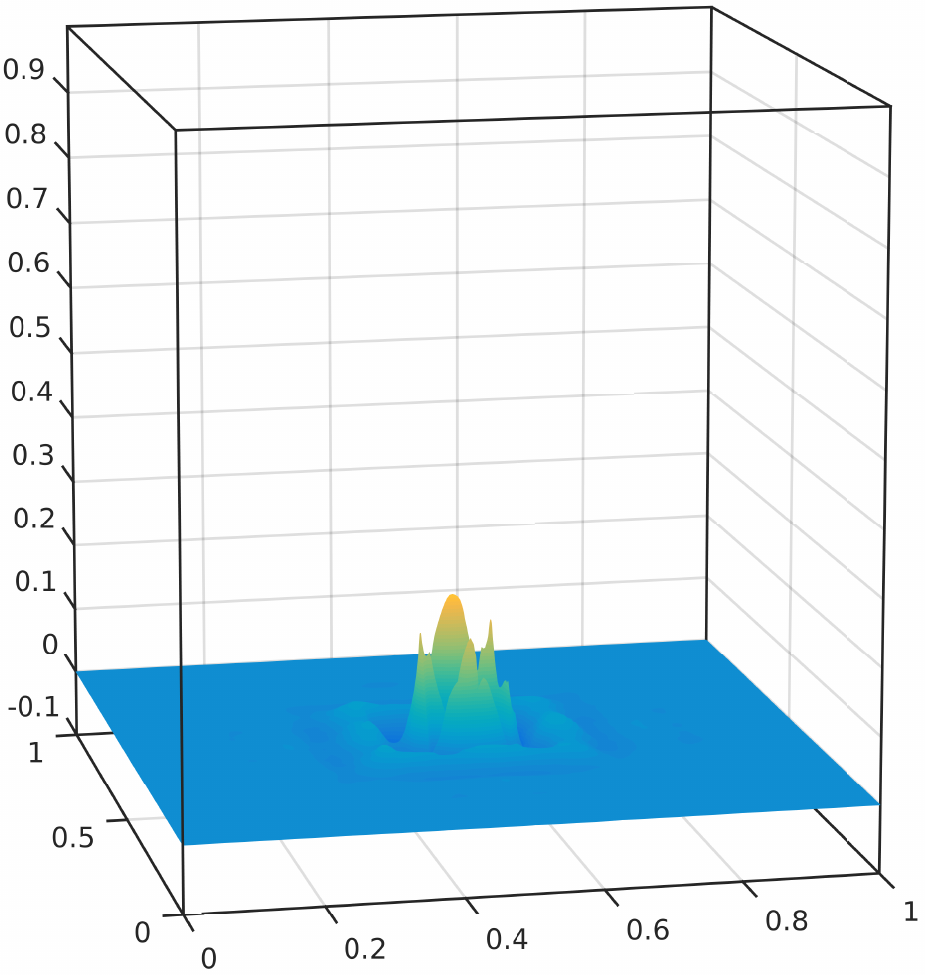}\\
\includegraphics[width=.48\textwidth]{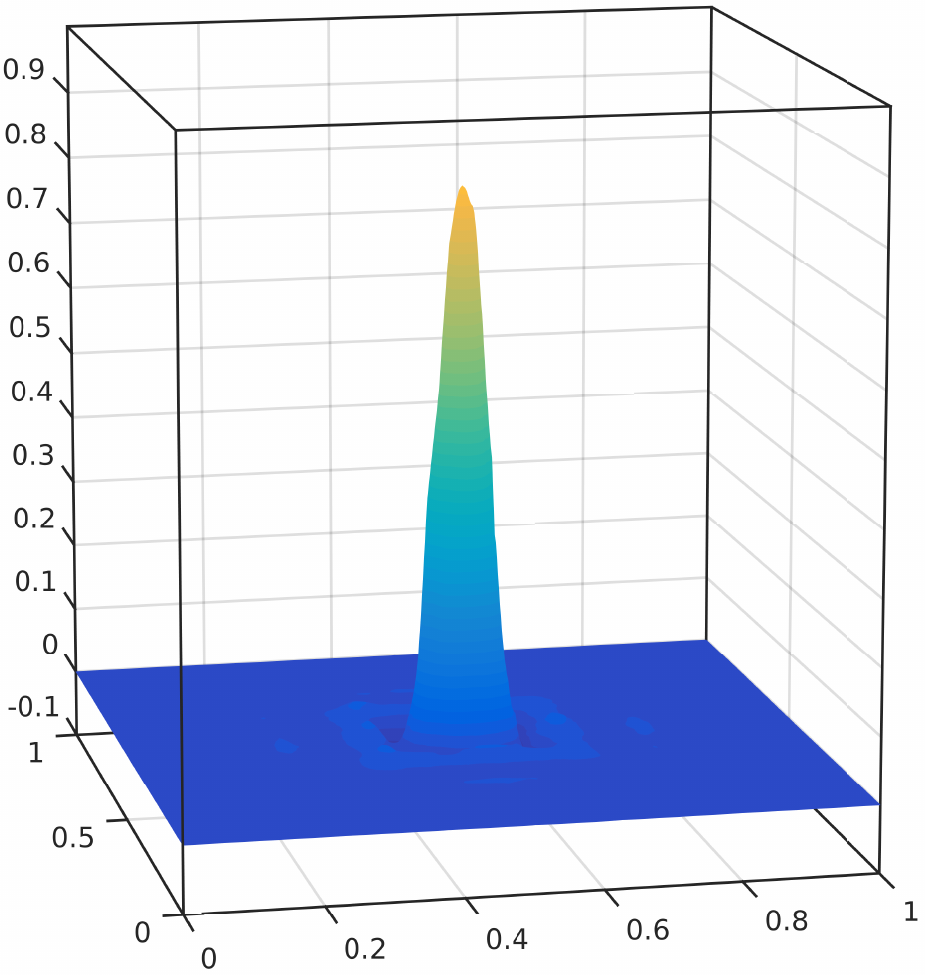}
\includegraphics[width=.48\textwidth]{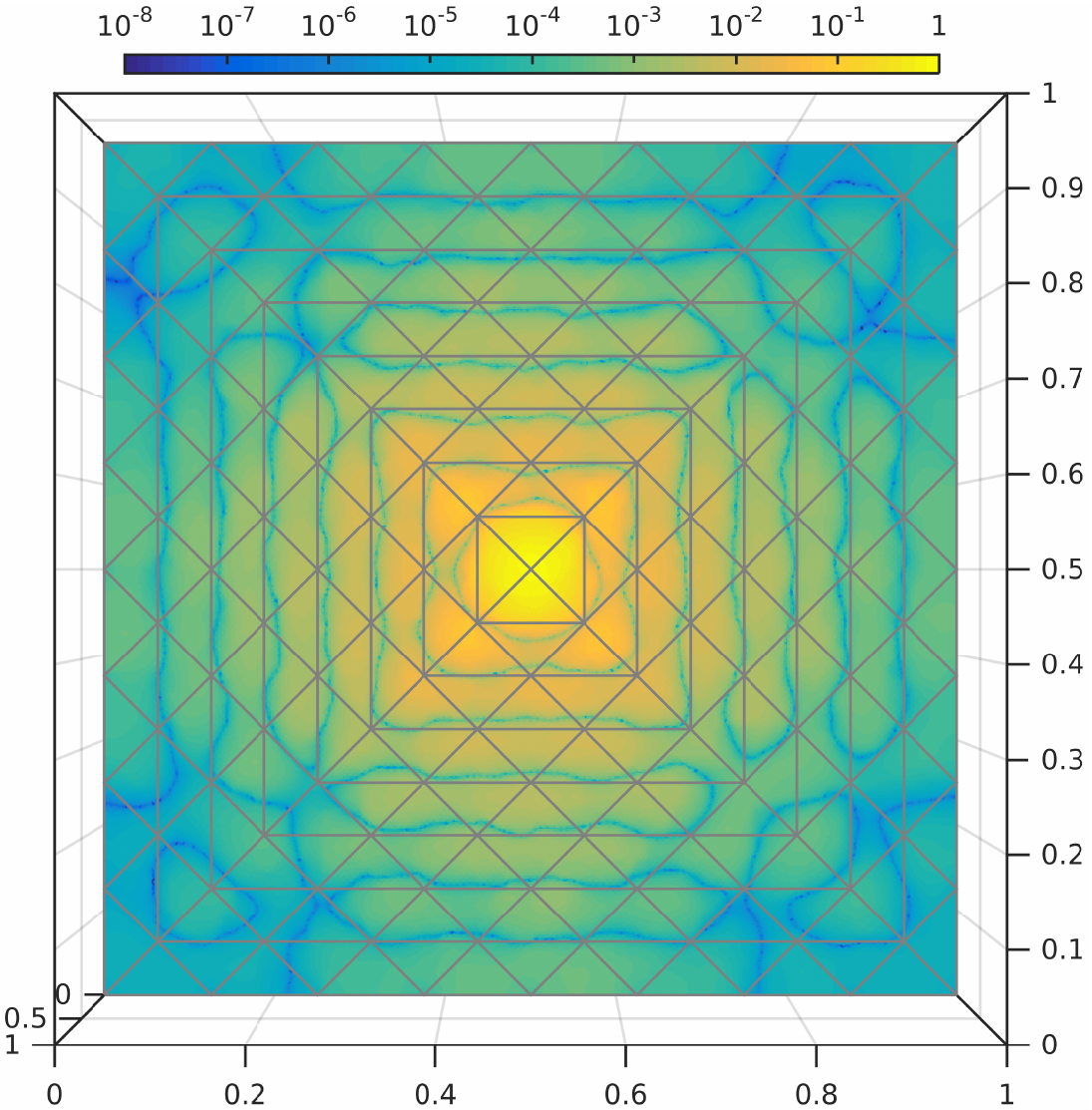}
\caption{Standard nodal basis function $\phi_z$ with respect to the coarse mesh $\tri_H$ (top left), corresponding ideal corrector $\Pf\phi_z$ (top right), and corresponding corrected basis function $\phi_z-\Pf\phi_z$ (bottom left). The bottom right figure shows a top view on the modulus of the basis function $\phi_z-\Pf\phi_z$ with logarithmic color scale to illustrate the exponential decay property. The underlying rough diffusion coefficient $A$ is depicted in Fig.~\ref{fig:numexpadata} (left).}
\label{fig:decay}       
\end{figure}
This decay allows the truncation of the computational domain of the corrector problems to local subdomains of diameter $\ell H$ roughly, where $\ell$ denotes a new discretization parameter - the localization (or oversampling) parameter.
The obvious way would be to simply replace the global domain $\Omega$ in the definition of the fine scale correction \eqref{e:finescaleproj} with suitable neighborhoods of the nodes $z$. This procedure was used in  \cite{MaPe11}. However, it turned out that it is advantageous to consider the following slightly more involved technique based on element correctors \cite{HePe13,HaMoPe15}.

We assign to any $T\in\tri_H$ its $\ell$-th order element patch
$\omega_{T,\ell}$ for a positive integer $\ell$; see Fig.~\ref{fig:patches} for an illustration.
\begin{figure}[tb]
\includegraphics[height=0.3\textwidth]{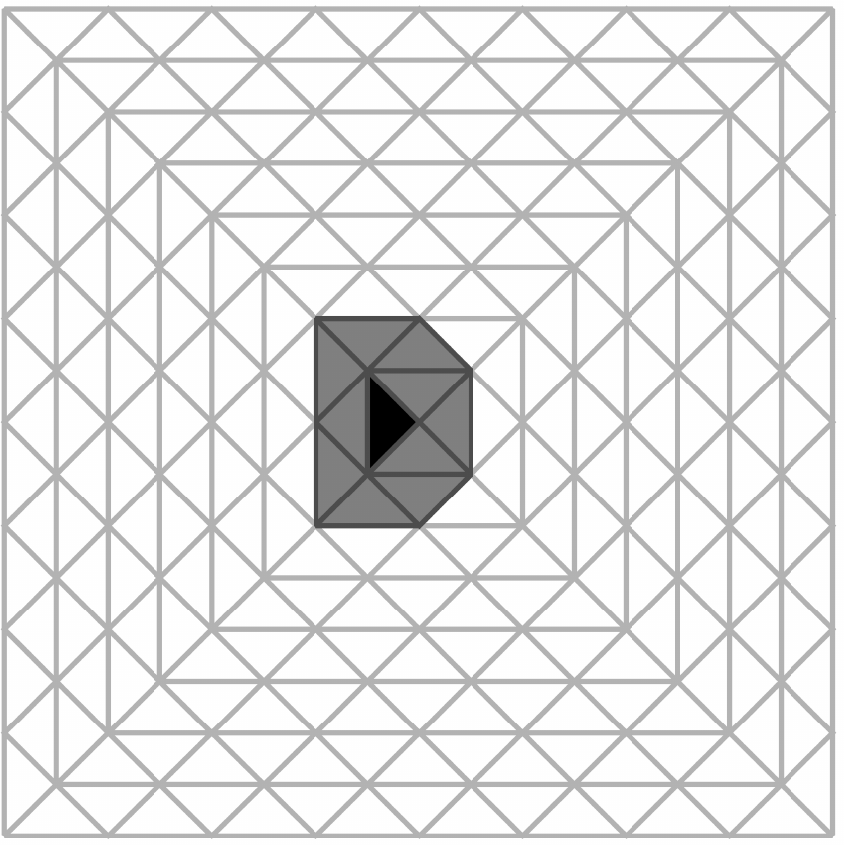}
\includegraphics[height=0.3\textwidth]{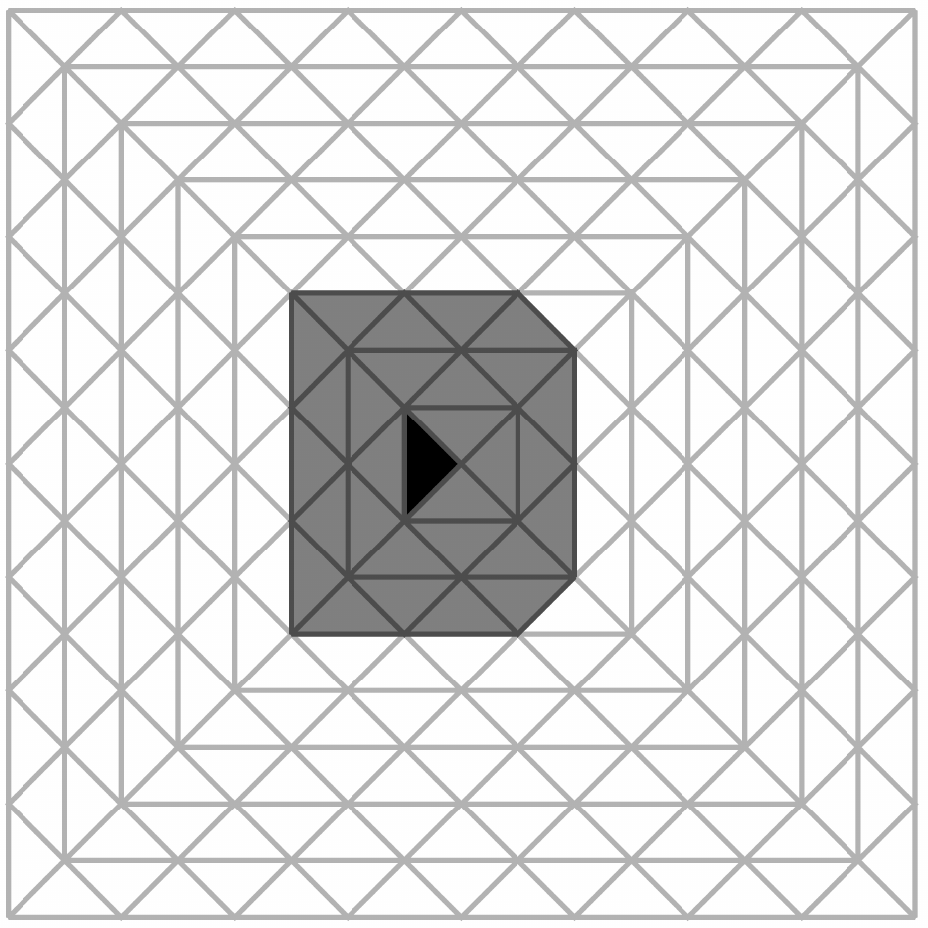}
\includegraphics[height=0.3\textwidth]{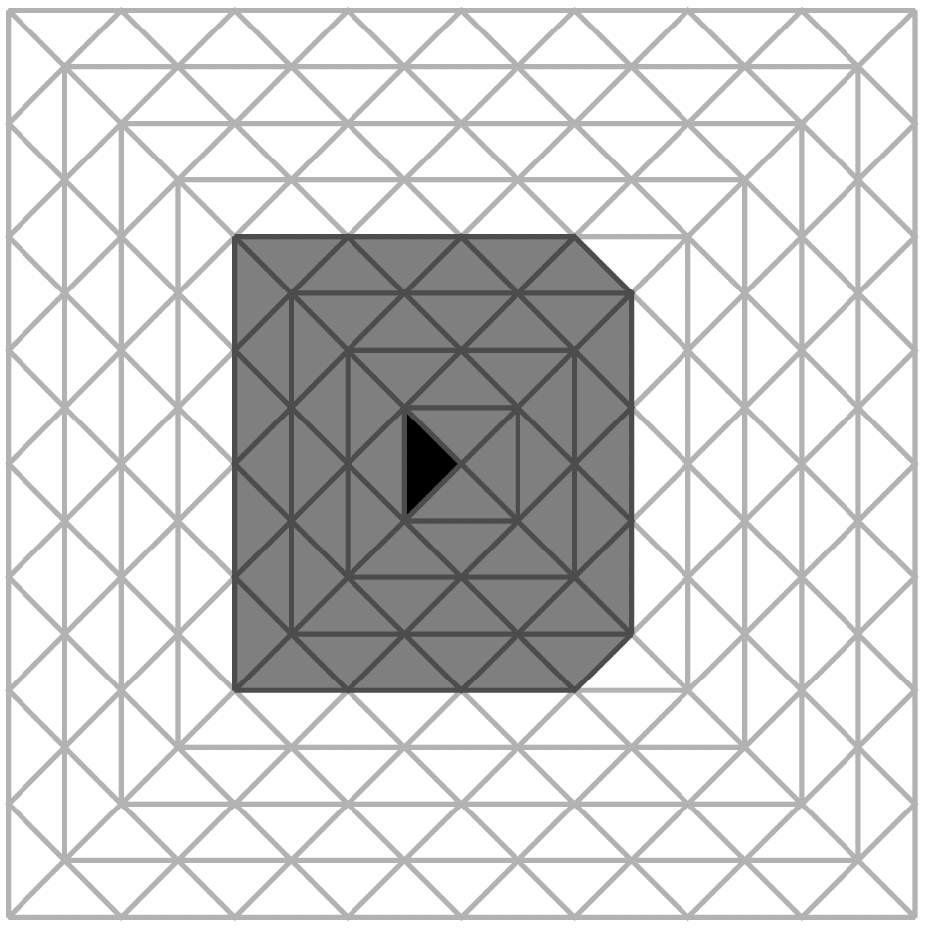}
\caption{Element patches $\omega_{T,\ell}$ for $\ell=1,2,3$ (from left to right) as they are used in the localized corrector problem \eqref{e:correctorproblemloc}.\label{fig:patches}}
\end{figure}
We introduce corresponding truncated function spaces
$$\Vf(\omega_{T,\ell})=\{v\in\Vf:\text{supp}(v)\subset\omega_{T,\ell}\}.$$

Given any nodal basis function $\phi_z\in \VHFE$,
let $\psi_{z,\ell,T}\in \Vf(\omega_{T,\ell})$ solve the localized element corrector problem
\begin{equation*}\label{e:correctorproblemloc}
(\kappa\nabla\psi_{z,\ell,T},\nabla w) = (1_T\kappa\nabla\phi_{z,\ell,T},\nabla w)
\quad\text{for all } w\in \Vf(\omega_{T,\ell}),
\end{equation*}
where $1_T$ denotes the indicator function of the element $T$.
Note that we impose homogeneous Dirichlet boundary condition on the artificial boundary of the patch which is well justified by the fast decay.
Let $\psi_{z,\ell}:=\sum_{T\in\tri_H:z\in T} \phi_{z,\ell,T}$ and
define the truncated basis function
\begin{equation*}
\phi_{z,\ell} := \phi_z - \psi_{z,\ell}.
\end{equation*}
The localized coarse space is then defined as the span of these corrected basis functions,
$$
\VHLODl:=\operatorname*{span}\{\phi_z-\psi_{z,\ell}\;\vert\;z\in\mathcal{N}_H\}.
$$
Because of the exponential decay of $\Pf\phi_z$ the $H^1(\Omega)$-error $\Pf\phi_z-\psi_{z,\ell}$ can be bounded in terms of $H^k$ (for any $k$) if $\ell=C\, k\, \log(H^{-1})$, i.e., if the diameter of the patches are of size $C\, k\, H\log(H^{-1})$ \cite{MaPe11,MaPe13,HePe13}.

\subsection{A best approximation result}
We will use the approximation properties of the space $\VHLODl$ on several occasions in the error analysis in the next section.
\begin{lemma}\label{l:best}
Let $m_1,m_2:\VhFE\times\VhFE\rightarrow \mathbb{R}$ be bounded bilinear forms and let $C,\delta_1,\delta_2\geq 0$ be such that, for all $w\in \VhFE$,
\begin{gather*}
m_1(v,w-\IH w)\leq CH^{\delta_1} \|v\|\|\sqrt{\kappa}\nabla w\|,\\
m_2(v,w-\IH w)\leq CH^{\delta_2} \|\sqrt{\kappa}\nabla v\|\|\sqrt{\kappa}\nabla w\|.
\end{gather*}
Furthermore, let $z^j\in \VhFE$ ($j=1,2$) solve
$$
(\kappa\nabla z^j,\nabla v)=m_j(f,v),\quad\text{for all }v\in\VhFE,
$$
and let $z^j_{H,\ell}\in \VHLODl$ solve
$$
(\kappa\nabla z^j_{H,\ell},\nabla v)=m_j(f,v),\quad\text{for all } v\in\VHLODl,
$$
with $\ell=C(\delta_j)\log(H^{-1})$ chosen appropriately. Then it holds that
\begin{gather*}
\|\sqrt{\kappa}\nabla(z^1-z^1_{H,\ell}\|\leq CH^{\delta_1}\|f\|,\\
\|\sqrt{\kappa}\nabla(z^2-z^2_{H,\ell}\|\leq CH^{\delta_2}\|\sqrt{\kappa}\nabla f\|.
\end{gather*}
\end{lemma}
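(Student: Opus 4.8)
The plan is to treat the idealized (global corrector) method first and then pay for the localization. Throughout, the form $(\kappa\nabla\cdot,\nabla\cdot)$ is, by Assumption~A, a symmetric coercive inner product on $\VhFE$, so $z^j$ and $z^j_{H,\ell}$ are its Riesz representatives of the functional $v\mapsto m_j(f,v)$ on $\VhFE$ and on $\VHLODl$, respectively. Subtracting the two defining equations yields the Galerkin orthogonality $(\kappa\nabla(z^j-z^j_{H,\ell}),\nabla v)=0$ for all $v\in\VHLODl$, whence $z^j_{H,\ell}$ is the energy-orthogonal projection of $z^j$ onto $\VHLODl$ and
\[
\|\sqrt{\kappa}\nabla(z^j-z^j_{H,\ell})\|=\min_{v\in\VHLODl}\|\sqrt{\kappa}\nabla(z^j-v)\|.
\]
Everything thus reduces to exhibiting one good competitor $v\in\VHLODl$.

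For the ideal space $\VHLOD=(1-\Pf)\VHFE$ I would first compute the error exactly. Restricting the fine equation to $v\in\VHLOD\subset\VhFE$ and comparing with the ideal Galerkin solution $z^j_H\in\VHLOD$ shows $(\kappa\nabla(z^j-z^j_H),\nabla v)=0$ on $\VHLOD$; by the orthogonal two-scale decomposition of Lemma~\ref{l:aod} this forces $z^j_H=\Pc z^j$ and $z^j-z^j_H=\Pf z^j\in\Vf$. Testing the fine equation with $\Pf z^j\in\VhFE$ and using that $\Pf$ is the energy projection onto $\Vf$ gives
\[
\|\sqrt{\kappa}\nabla\Pf z^j\|^2=(\kappa\nabla z^j,\nabla\Pf z^j)=m_j(f,\Pf z^j).
\]
The crucial structural point is that $\Pf z^j\in\Vf=\kernel\IH$, hence $\IH\Pf z^j=0$ and $\Pf z^j=\Pf z^j-\IH\Pf z^j$. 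Inserting this into the hypothesis on $m_j$ (with $v=f$ and $w=\Pf z^j$) and cancelling one factor of $\|\sqrt{\kappa}\nabla\Pf z^j\|$ yields the idealized bounds $\|\sqrt{\kappa}\nabla\Pf z^1\|\leq CH^{\delta_1}\|f\|$ and $\|\sqrt{\kappa}\nabla\Pf z^2\|\leq CH^{\delta_2}\|\sqrt{\kappa}\nabla f\|$.

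It remains to replace $\Pc z^j\in\VHLOD$ by an admissible competitor in $\VHLODl$. Writing $\Pc z^j=\sum_{z\in\N_H}c_z(\phi_z-\Pf\phi_z)$ in the ideal corrected basis, the natural candidate is $v_\ell:=\sum_{z\in\N_H}c_z\,\phi_{z,\ell}\in\VHLODl$, so that $\Pc z^j-v_\ell=\sum_{z\in\N_H}c_z(\psi_{z,\ell}-\Pf\phi_z)$ is a superposition of element-corrector truncation errors. By the triangle inequality,
\[
\|\sqrt{\kappa}\nabla(z^j-v_\ell)\|\leq\|\sqrt{\kappa}\nabla\Pf z^j\|+\Big\|\sqrt{\kappa}\nabla\sum_{z\in\N_H}c_z(\Pf\phi_z-\psi_{z,\ell})\Big\|,
\]
the first term being the ideal bound. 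For the second I would invoke the exponential decay of the correctors established in \cite{MaPe11,HePe13,MaPe13}, combined with the finite overlap of the patches $\omega_{T,\ell}$ and the stability $\|\sqrt{\kappa}\nabla\Pc z^j\|\leq\|\sqrt{\kappa}\nabla z^j\|$ of the coefficients in terms of the data; this bounds the term by $C\,\ell^{d/2}e^{-c\ell}$ times the relevant data norm. Choosing $\ell=C(\delta_j)\log(H^{-1})$ makes $\ell^{d/2}e^{-c\ell}\lesssim H^{\delta_j}$, so the localization error matches the ideal rate, and combining the two estimates with the best-approximation identity finishes the proof.

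The main obstacle is this localization step: one must control the summed corrector errors $\sum_{z}c_z(\Pf\phi_z-\psi_{z,\ell})$ uniformly, which requires the decay estimate for a single corrector together with a colouring/finite-overlap argument to pass from nodewise bounds to the global energy norm, and careful tracking of the algebraic prefactor $\ell^{d/2}$ so that the logarithmic choice of $\ell$ absorbs it into the rate $H^{\delta_j}$ rather than degrading it. The idealized part, by contrast, is entirely elementary once the identity $\Pf z^j=\Pf z^j-\IH\Pf z^j$ is exploited.
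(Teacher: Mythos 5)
Your proposal is correct and, in substance, takes the same route as the paper: the paper's entire proof of Lemma~\ref{l:best} is a citation of \cite[Theorem 4.1]{MaPe11} and \cite[Lemma 3.2]{MaPr15}, and the argument you reconstruct---Galerkin best approximation in the $\kappa$-energy norm, the ideal error identity $z^j-z^j_H=\Pf z^j$ estimated via $\IH\Pf z^j=0$ together with the hypothesis on $m_j$, then a localization penalty of order $\ell^{d/2}e^{-c\ell}$ absorbed into the rate $H^{\delta_j}$ by the choice $\ell\simeq C(\delta_j)\log(H^{-1})$---is precisely the content of those references. Your idealized step is complete as written, and the localization step (single-corrector decay, finite overlap/colouring, coefficient stability) is deferred to \cite{MaPe11,HePe13,MaPe13}, which is exactly the same outsourcing the paper itself performs.
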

\begin{proof}
The proof follows from the statement and proof of \cite[Theorem 4.1]{MaPe11} but, for a complete proof, see also in \cite[Lemma 3.2]{MaPr15}. The dependency of the constant $C(\delta_j)$ in the relation $\ell=C(\delta_j)\log(H^{-1})$ is also discussed there.
\end{proof}

\subsection{The proposed method}
We are ready to present the approximation of the quadratic eigenvalue problem. Again, we use an operator formulation based on $A_H,A_H^*:X\rightarrow \XHLODl=\VHLODl\times \VHLODl$. Given $x\in X$, $A_H x\in \XHLODl$ and $A^*_H x\in \XHLODl$ are characterized by
\begin{gather}
a(A_H x,y)=b(x,y)\quad\text{for all }y\in \XHLODl,\\
a(x, A^*_H y)=b(x,y)\quad\text{for all }y\in \XHLODl.
\end{gather}
We denote by $\{\mu^i_H\}_{i=1}^{r_H}$ the eigenvalues of $A_H$ that approximate an eigenvalue $\mu$ of the operator $A$ from \eqref{e:A}. The corresponding invariant subspace is denoted $R(E_H)$. Given a circle $\Gamma_H\in \mathbb{C}$ in the resolvent set of  $A_H$ enclosing the eigenvalues $\{\mu^i_H\}_{i=1}^{r_H}$, let,
\begin{equation}\label{e:EH}
E_H=\frac{1}{2\pi i}\int_{\Gamma_H} (z-A_H)^{-1}\,dz.
\end{equation}
We note that $E_H:X\rightarrow X_H$ is a projection operator, just as its reference counterpart $E$ defined in \eqref{e:E}.

\subsection{Complexity}
Let $N_H=|\mathcal{N}_H|$ be the degrees of freedom in the coarse finite element mesh $\mathcal{T}_H$ and $N_h=|\mathcal{N}_h|$ the degrees of freedom in the coarse finite element mesh $\mathcal{T}_H$. The computation of the multiscale basis $\VHLODl$, using vertex patches of diameter $H\log(H^{-1})$, is proportional to solving $N_H$ (independent) Poisson type equations of size $(N_h\log(N_H))/N_H$. Then one quadratic eigenvalue problem of size $N_H$ need to be solved. Since the convergence rate is typically high (as seen in the numerical experiments) $N_H$ can be kept small leading to a very cheap system to solve. This should be compared to solving a quadratic eigenvalue problem of size $N_h$.

\section{Error analysis}\label{s:error}
In this section we study convergence of the coarse GFE approximation to the discrete reference solution. We shall emphasize that our error analysis is fully discrete and does not rely on any regularity assumptions on the PDE eigenvalue problem in the limit $h\rightarrow 0$. In the presence of rough coefficients, this will lead to sharp rate. However, for smoother problems, the worst-case nature of our analysis is presumably a bit pessimistic.

Recall that the space $X=\VhFE\times \VhFE$ is equipped with the product norm $\|x\|^2_X=\|\sqrt{\kappa}\nabla x_1\|^2+\|x_2\|^2$.

\paragraph{{\bf Assumption B}}
We assume that the bilinear form $d$ associated with the damping is real and bounded and that there exist $C>0$ and $0<\gamma\leq 2$ such that
\begin{equation}\label{comp}
|d(v,w-\IH w)|\leq CH^\gamma \|\nabla v\|\|\nabla w\|\quad\text{for all }v,w\in \VhFE.
\end{equation}
We follow the theory presented in \cite{BaOs91}, which is in turn based on \cite{Os75}. Note that the constants in the analysis are independent of variations in $\kappa$.
\begin{lemma}\label{l:aha}
Assumptions (A) and (B) imply
\begin{gather*}
\|A-A_H\|_{X,X}\leq C H^{\min(1,\gamma)},\\
\|A^*-A^*_H\|_{X,X}\leq C H^{\min(1,\gamma)}.
\end{gather*}
Furthermore,
\begin{gather*}
\|(A-A_H)|_{R(E)}\|_{X,X}\leq CH^{\gamma}\\
\|(A^*-A^*_H)|_{R(E^*)}\|_{X,X}\leq CH^{\gamma}
\end{gather*}
\end{lemma}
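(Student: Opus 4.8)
The plan is to estimate the operator norms by testing the defining equations of $A-A_H$ and $A^*-A^*_H$ against the bilinear form $a$, exploiting its coercivity, and then reducing the whole task to the best-approximation result Lemma~\ref{l:best}. Given $x=(x_1,x_2)\in X$, write $Ax=(p_1,p_2)$ and $A_Hx=(q_1,q_2)$. From the characterizations \eqref{e:A} and the definition of $A_H$ we have $a(Ax-A_Hx,y)=b(x,y)-b(x,y)=0$ for all $y\in\XHLODl$, so $Ax-A_Hx$ is $a$-orthogonal to $\XHLODl$; hence $A_Hx$ is precisely the $a$-orthogonal (equivalently, component-wise $(\kappa\nabla\cdot,\nabla\cdot)$-) projection of $Ax$ onto $\XHLODl$. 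Since $a$ splits as $a((x_1,x_2),(y_1,y_2))=k(x_1,y_1)+(x_2,y_2)$ and $\XHLODl=\VHLODl\times\VHLODl$, the two components decouple: the first component $q_1\in\VHLODl$ is the $k$-Galerkin approximation of $p_1$, while the second component $q_2\in\VHLODl$ is the $L^2$-projection of $p_2$. Controlling $\norm{x-A_Hx}$ therefore amounts to controlling $\norm{\sqrt{\kappa}\nabla(p_1-q_1)}$ and $\norm{p_2-q_2}$.

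Next I would identify $p_1$ and $p_2$ as solutions of Poisson-type problems with right-hand sides built from $b$. Unfolding \eqref{e:A} with $y=(y_1,0)$ and $y=(0,y_2)$ gives $k(p_1,y_1)=b(x,(y_1,0))=-d(x_1,y_1)-(x_2,y_1)$ and $(p_2,y_2)=b(x,(0,y_2))=(x_1,y_2)$. The second identity says $p_2=x_1$, so the second component is trivially resolved and its projection error is of lower order; the real content sits in the first component. There $p_1$ solves $(\kappa\nabla p_1,\nabla y_1)=-d(x_1,y_1)-(x_2,y_1)$, which is exactly of the form covered by Lemma~\ref{l:best} with $f=x_1$ and $f=x_2$ respectively and with $m_j$ equal to (minus) the damping form $d$ and (minus) the $L^2$-pairing. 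I would verify the two hypotheses of Lemma~\ref{l:best}: the $L^2$-pairing $(v,w-\IH w)$ obeys the approximation bound with $\delta=1$ by the standard interpolation estimate \eqref{e:interr} together with the $\sqrt{\kappa}$-equivalence of norms from Assumption~(A), while the damping term $d(v,w-\IH w)$ obeys it with $\delta=\gamma$ directly by Assumption~(B) \eqref{comp}. Feeding these into Lemma~\ref{l:best} and summing the contributions yields $\norm{\sqrt{\kappa}\nabla(p_1-q_1)}\lesssim H^{\min(1,\gamma)}\norm{x}_X$, which is the first stated bound; the adjoint bound follows verbatim after replacing $A$, $a(\cdot,\cdot)$-testing order, and $d$ by its transpose, using the analogous characterization of $A^*_H$.

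Finally, for the two refined estimates restricted to the invariant subspaces $R(E)$ and $R(E^*)$ I would use that the eigenfunctions gain regularity in a discrete sense: if $x\in R(E)$ then $x_1$ itself solves an elliptic problem whose right-hand side already carries the structure of $b$, so the $\delta=1$ contribution coming from the $(x_2,y_1)$ term can be bootstrapped away, leaving only the genuinely damping-limited rate $H^\gamma$. Concretely, on $R(E)$ one has $Ax=\mu^{-1}x$ (up to the generalized-eigenfunction relation recorded after \eqref{e:E}), which lets me re-express $x_2$ through $x_1$ and absorb the would-be $H^1$ term into a higher-order quantity, so that only the damping form governs the rate. The main obstacle I anticipate is this last refinement: one must argue carefully, using the generalized-eigenfunction identity and the $L^2(\Omega)$-orthogonality of $\VHFE$ and $\Vf$ noted in the remark after Lemma~\ref{l:aod}, that the slower $H^{\min(1,\gamma)}$ term is absent on the invariant subspace and that the hidden constants remain independent of the variations in $\kappa$. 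The decoupling and the two global bounds are routine once Lemma~\ref{l:best} is in hand; the subspace-restricted sharpening is where the real work lies.
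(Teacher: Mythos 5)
Your proposal is correct and follows essentially the same route as the paper's proof. The first half coincides with the paper's argument: the components of $A_Hx$ decouple into the $(\kappa\nabla\cdot,\nabla\cdot)$-Galerkin approximation of $y_1$ and the $L^2$-projection of $y_2=x_1$ onto $\VHLODl$, and Lemma~\ref{l:best}, applied with rate $\gamma$ for the damping term (Assumption~(B)) and rate $1$ for the $L^2$-pairing, gives $\|(A-A_H)x\|_X\leq CH^{\min(1,\gamma)}\|x\|_X$. For the restricted bounds you name exactly the two ingredients the paper uses, and the step you defer as ``the real work'' is completed in two lines: the generalized-eigenfunction relation gives $x_2^j=\sum_{i=1}^{j-1}\lambda^{i}x_1^{j-i+1}+\lambda^{j-1}x_1^1$ and hence $\|\sqrt{\kappa}\nabla x_2\|\leq C\|\sqrt{\kappa}\nabla x_1\|$; and since the Galerkin error in Lemma~\ref{l:best} lies in $\Vf$, the $L^2$-orthogonality $(\IH x_2,v)=0$ for $v\in\Vf$ allows the replacement of $x_2$ by $x_2-\IH x_2$ in the critical pairing, so that the $L^2$-pairing hypothesis holds in the $m_2$-form with $\delta_2=2$, namely $|(x_2,v)|=|(x_2-\IH x_2,v)|\leq CH^2\|\nabla x_2\|\,\|\nabla v\|$. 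Lemma~\ref{l:best} then yields $\|\sqrt{\kappa}\nabla(y_1-y_1^H)\|\leq C(H^\gamma+H^2)\|\sqrt{\kappa}\nabla x_1\|$, and $\gamma\leq 2$ gives the claimed $CH^\gamma\|x\|_X$; this is precisely the paper's computation, so your sketch has no gap in substance, only in execution. One small slip: an eigenfunction of $A$ satisfies $Ax=\mu x$, not $Ax=\mu^{-1}x$, since $\mu=\lambda^{-1}$ is already the eigenvalue of $A$; this does not affect the structure of your argument.
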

\begin{proof}
 Given $x=(x_1,x_2)\in X$, $y=(y_1,y_2)=Ax\in X$ satisfies $y_2=x_1$ and
 \begin{equation}
 (\kappa\nabla y_1,\nabla v)=-d(x_1,v)-( x_2,v)\quad\text{for all }v\in \VhFE.
 \end{equation}
  We let $A_Hx=\left[\begin{array}{c}y^H_1\\ y^H_2\end{array}\right]$ where $y_2^H=Q_c x_1$, $Q_c$ being the $L^2$-projection onto $\VHLODl$, and
$y^H_1$ solves,
 \begin{equation}
 (\kappa\nabla y^H_1,\nabla v)=-d(x_1,v)-( x_2,v),\quad\forall v\in \VHLODl.
 \end{equation}
We apply Lemma \ref{l:best} twice (with $\delta_1=\gamma$ and $\delta_2=1$) and conclude
\begin{equation}
\|\nabla(y_1-y_1^H)\|\leq C  (H^{\gamma} \|\sqrt{\kappa}\nabla x_1\|+H \|x_2\|),
\end{equation}
with $C>0$ independent of $H$. Furthermore,
\begin{equation*}
\|y_2-y_2^H\|=\|x_1-Q_c x_1\|\leq C H \|\nabla x_1\|.
\end{equation*}
In summary, for any $x\in X$ it holds,
\begin{equation}
\|(A-A_H)x\|_X\leq C H^{\min(1,\gamma)} \|x\|_X.
\end{equation}
Similar arguments gives the bound for $\|A^*-A^*_H\|_{X,X}$.
This proves the first part of the lemma.

Let $\mu$ be an eigenvalue of multiplicity $r$ with corresponding eigenvector $x$ with components $x^1_1$ and $x^1_2$. Further let $x_1^j$ and $x_2^j$ denote the two components of its generalized eigenfunctions with $j\leq r$ fulfilling
\begin{equation*}
a(x^j,y)=\lambda b(x^j,y)+\lambda a(x^{j-1},y);
\end{equation*}
see \cite[p. 693]{BaOs91}. We have that
\begin{equation*}
x_2^j=\sum_{i=1}^{j-1} \lambda^{i}x_1^{j-i+1}+\lambda^{j-1}x_1^1
\end{equation*}
and also that, for all $2\leq j\leq r$,
\begin{equation*}
 \|\sqrt{\kappa}\nabla x^{j-1}_1\|\leq C \|\sqrt{\kappa}\nabla x^j_1\|.
\end{equation*}
Hence,
\begin{equation*}
\|\sqrt{\kappa}\nabla x_2^j\|\leq C \|\sqrt{\kappa}\nabla x^j_1\|.
\end{equation*}
Applying Lemma \ref{l:best} twice with $\delta_2=\gamma$ and $\delta_2=2$ and using that ${(\IH x_2,v_f)=0}$, we get for any $x\in R(E)$ with $y=Ax$ and $y_H=A_Hx$ that
\begin{multline*}
 \|\sqrt{\kappa}\nabla(y_1-y^H_1)\|^2\\
 \leq C\left(H^\gamma \|\sqrt{\kappa}\nabla x_1\|+CH^2\|\sqrt{\kappa}\nabla x_2\|\right)\|\sqrt{\kappa}\nabla(y_1-y_1^H)\|\\
 \leq C(H^\gamma+H^2) \|\sqrt{\kappa}\nabla x_1\|\|\sqrt{\kappa}\nabla (y_1-y_1^H)\|.
\end{multline*}
Since $\gamma\leq 2$, we conclude that
\begin{equation}
\|(A-A_H)x\|_X\leq C H^\gamma \|x\|_X.
\end{equation}
Similar arguments yield the bound for $\|(A^*-A^*_H)|_{R(E^*)}\|_{X,X}$.
\end{proof}
Lemma \ref{l:aha} shows that the approximate operator $A_H$ converges in norm, with rate $H^{\min(1,\gamma)}$, to $A$. Therefore also the spectrum converges. Given the circle $\Gamma$ in the complex plane only containing the isolated eigenvalue $\mu$ for sufficiently small $H$ the approximate eigenvalues $\mu_H^j$ will also be inside $\Gamma$; see \cite{We97}. However, a quantification of what sufficiently small means is difficult as it depends also on the unknown spectrum and, in particular, the sizes and the separation of the eigenvalues. For a given range of coarse discretization parameters, we will, hence, have to assume that a curve $\Gamma$ exists that contains only $\mu$ and $\mu_H^j$, $j=1,\dots,r$, or expressed in an other way, that the curve $\Gamma_H$ in Equation~\eqref{e:EH} may be chosen equal to $\Gamma$.

\paragraph{{\bf Assumption C}}
Given $h$ and an eigenvalue $\mu$, we assume that there exists $H_0>h$ such that for all $h\leq H \leq H_0$ the curve $\Gamma_H$ in Equation~\eqref{e:EH} only containing the eigenvalues $\mu_H^j$, $j=1,\dots,r$ can be chosen equal to $\Gamma$ which contains only $\mu$.

We will comment further on this assumption in the numerical examples. For the subsequent error analysis it means that the results are only valid in the regime $h\leq H \leq H_0$.

\begin{lemma}\label{l:efunc}
The Assumptions (A), (B), and (C) imply that, for all sufficiently small $H$,
 $\|(E-E_H)|_{R(E)}\|_{X}\leq C H^\gamma$, $\operatorname{dim}(R(E))=\operatorname{dim}(R(E_H))$, and
 $$\hat{\delta}(R(E),R(E_H))\leq CH^\gamma,
 $$
where the gap $\hat{\delta}(M,N)$ between $M$ and $N$ is defined as $\max(\delta(M,N),\delta(N,M))$ with $\displaystyle\delta(M,N):=\sup_{x\in M:\|x\|=1}\text{dist}(x,N)$.
\end{lemma}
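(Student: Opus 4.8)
The plan is to follow the classical Dunford--Riesz (contour-integral) approach to spectral-projection errors, as in \cite{BaOs91}, exploiting that Assumption~(C) lets me use a common contour $\Gamma=\Gamma_H$ for both $E$ and $E_H$. Writing the difference of the two Riesz projections as a single contour integral and inserting the resolvent identity, I would obtain
\begin{equation*}
E-E_H=\frac{1}{2\pi i}\int_\Gamma (z-A_H)^{-1}(A-A_H)(z-A)^{-1}\,dz.
\end{equation*}
The first preparatory step is to show that, for $H$ small enough, $\Gamma$ lies in the resolvent set of $A_H$ and that $\norm{(z-A_H)^{-1}}$ is bounded uniformly for $z\in\Gamma$ and uniformly in $H$. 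This follows from a Neumann-series argument using the factorization $(z-A_H)=(z-A)\bigl(I-(z-A)^{-1}(A-A_H)\bigr)$: since $\norm{(z-A)^{-1}}$ is bounded on the compact set $\Gamma$ and $\norm{A-A_H}_{X,X}\leq CH^{\min(1,\gamma)}\to 0$ by Lemma~\ref{l:aha}, the perturbation factor is invertible with uniformly bounded inverse for small $H$.

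The key observation for the sharp rate is that $(z-A)^{-1}$ commutes with $A$, hence with $E$, and therefore leaves the invariant subspace $R(E)$ invariant. Consequently, for $x\in R(E)$ the element $(z-A)^{-1}x$ again lies in $R(E)$, so I may apply the \emph{restricted} estimate $\norm{(A-A_H)|_{R(E)}}_{X,X}\leq CH^\gamma$ from Lemma~\ref{l:aha} instead of the crude one. Combined with the uniform resolvent bounds and the finite length of $\Gamma$, this yields $\norm{(E-E_H)|_{R(E)}}_X\leq CH^\gamma$, the first assertion. Running the same estimate on all of $X$ with the crude bound $\norm{A-A_H}_{X,X}\leq CH^{\min(1,\gamma)}$ gives $\norm{E-E_H}_X\leq CH^{\min(1,\gamma)}$, so $\norm{E-E_H}_X<1$ for $H$ small. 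The dimension identity then follows by a standard injectivity argument: if $x\in R(E)$ satisfies $E_Hx=0$ then $x=Ex=(E-E_H)x$ forces $x=0$, so $E_H|_{R(E)}\colon R(E)\to R(E_H)$ is injective, and symmetrically $E|_{R(E_H)}$ is injective, whence $\dimension R(E)=\dimension R(E_H)$.

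For the gap, one direction is immediate: for unit $x\in R(E)$ one has $\dist(x,R(E_H))\leq\norm{x-E_Hx}=\norm{(E-E_H)x}$, so $\delta(R(E),R(E_H))\leq\norm{(E-E_H)|_{R(E)}}_X\leq CH^\gamma$. I expect the main obstacle to be the reverse direction $\delta(R(E_H),R(E))$, because the sharp $H^\gamma$ bound is available only on $R(E)$, not on $R(E_H)$; the naive estimate $\dist(x,R(E))\leq\norm{(E-E_H)x}$ for $x\in R(E_H)$ only delivers the crude rate $H^{\min(1,\gamma)}$. I would resolve this by using that $X$, equipped with the inner product inducing $\norm{\cdot}_X$, is a finite-dimensional Hilbert space: for closed subspaces of equal finite dimension with gap below $1$, a standard Hilbert-space fact (equality of the principal angles between the two subspaces) gives the symmetry $\delta(R(E_H),R(E))=\delta(R(E),R(E_H))$. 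Since $\dimension R(E)=\dimension R(E_H)$ and $\delta(R(E),R(E_H))<1$ for small $H$, this upgrades the reverse direction to $CH^\gamma$, yielding $\hat{\delta}(R(E),R(E_H))\leq CH^\gamma$. Alternatively, the reverse direction can be obtained by the analogous contour argument applied to the adjoint operators, invoking the sharp bound $\norm{(A^*-A^*_H)|_{R(E^*)}}_{X,X}\leq CH^\gamma$ of Lemma~\ref{l:aha}, which is the route taken in \cite{BaOs91}.
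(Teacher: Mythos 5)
Your proposal is correct and takes essentially the same route as the paper's proof: the contour-integral representation of $E-E_H$ via the resolvent identity, a perturbation (Neumann-series) argument for the uniform bound on $(z-A_H)^{-1}$ along $\Gamma$ (using Assumption (C) so that $\Gamma_H=\Gamma$), the restricted estimate $\|(A-A_H)|_{R(E)}\|_{X,X}\leq CH^\gamma$ applied after noting that $(z-A)^{-1}$ preserves $R(E)$, injectivity of $E_H|_{R(E)}$ for the dimension count, and the Kato-type symmetry $\delta(R(E_H),R(E))=\delta(R(E),R(E_H))$ for equal-dimensional subspaces with gap below one. If anything, your write-up makes explicit two points the paper leaves implicit (the invariance of $R(E)$ under the resolvent, which is exactly what licenses the sharp $H^\gamma$ rate inside the integral, and the two-sided injectivity needed for $\dimension R(E)=\dimension R(E_H)$), so no gaps remain.
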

\begin{proof}
From the definition of $\Gamma$ there exists a constant $C'$ such that,
\begin{equation}\label{e:resolve}
\max_{z\in \Gamma}\|(z-A)^{-1}\|_{X,X}\leq C'.
\end{equation}
Furthermore, using Lemma \ref{l:aha},
\begin{align*}
\|(z-A_H)x\|_{X}&\geq \left|\|(z-A)x\|_{X}-\|(A-A_H)x\|_X\right|\\
&\geq C'\|x\|_X-CH^{\min(1,\gamma)}\|x\|_X\\
&\geq \frac{C'}{2}\|x\|_X,
\end{align*}
which holds if $CH^{\min(1,\gamma)}<\frac{C'}{2}$.  Therefore,
we have for each $x\in R(E)$,
\begin{align}\label{e:calc}
\|(E_H-E)x\|_X&\leq (2\pi)^{-1}\int_\Gamma \|((z-A_H)^{-1}-(z-A)^{-1}x\|_X\,|dz|\\ \nonumber
&\leq (2\pi)^{-1}\int_\Gamma \|(z-A_H)^{-1}(A_H-A)(z-A)^{-1}x\|_X\,|dz|\\ \nonumber
&\leq (2\pi)^{-1}\int_\Gamma \|(z-A_H)^{-1}(A_H-A)(z-A)^{-1}x\|_X\,|dz|\\ \nonumber
&\leq C H^\gamma.
\end{align}
Here, we have used Lemma~\ref{l:aha} and $\Gamma_H=\Gamma$ (Assumption~(C)). By choosing $H$ so that $C H^\gamma\leq \frac{1}{2}$ we have
\begin{equation*}
\|E_H|_{R(E)}\|_{X,X}\leq 3/2\quad\text{and}\quad\|(E_H|_{R(E)}^{-1}\|_{X,X}\leq 2,
\end{equation*}
i.e., $E_H|_{R(E)}$ is one-one. This implies $\operatorname{dim}(R(E))=\operatorname{dim}(R(E_H))$. From Equation~\eqref{e:calc} we see that
$$
\delta(R(E),R(E_H))\leq CH^\gamma.
$$
If $CH^\gamma<1$ it follows from \cite{Ka76} (see also \cite[Theorem 6.1]{BaOs91}) that $\delta(R(E_H),R(E))=\delta(R(E),R(E_H))$ and, hence, the assertion.
\end{proof}

We are now ready to present the main theorem of the paper about the error in the eigenvalues.
\begin{theorem}\label{t:main}
Let $\mu$ be an isolated eigenvalue of $A$ of algebraic multiplicity $r$ and ascent $\alpha$, with associate invariant subspace $R(E)$. Under assumption (A), (B), and (C), for sufficiently small $H$ it holds,
\begin{equation}
|\mu-\mu^j_H|\leq C H^{\frac{2\gamma}{\alpha}},\quad j=1,\dots,r.
\end{equation}
\end{theorem}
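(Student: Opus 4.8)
The plan is to reduce the statement to the abstract non-symmetric eigenvalue error theory of Babuška and Osborn and to feed it the operator estimates already established in Lemma~\ref{l:aha}. The key preliminary observation is that $X$ equipped with $a(\cdot,\cdot)$ is a finite dimensional Hilbert space whose induced norm is exactly $\|\cdot\|_X$, and that $A$ and $A^*$ are genuinely adjoint with respect to this inner product, since $a(Ax,y)=b(x,y)=a(x,A^*y)$. Moreover the defining relations for $A_H$ and $A_H^*$ identify them as Galerkin projections: writing $\Pi_H$ for the $a$-orthogonal projection onto $\XHLODl$, one reads off $A_H=\Pi_H A$ and $A_H^*=\Pi_H A^*$, and on the discrete space $\XHLODl$ these two are mutually $a$-adjoint, which is all that is needed since the $\mu_H^j$ are eigenvalues of $A_H$ on its range. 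This places us exactly in the framework of \cite[Section 7]{BaOs91}, so that the individual eigenvalue estimate for an eigenvalue of algebraic multiplicity $r$ and ascent $\alpha$ applies and yields
\[
|\mu-\mu_H^j|\leq C\Bigl(\,\textstyle\sum_{i,j=1}^r|a((A-A_H)\phi_j,\phi_i^*)|+\|(A-A_H)|_{R(E)}\|_{X,X}\,\|(A^*-A_H^*)|_{R(E^*)}\|_{X,X}\Bigr)^{1/\alpha},
\]
where $\{\phi_j\}$ and $\{\phi_i^*\}$ are dual bases of the invariant subspaces $R(E)$ and $R(E^*)$. Assumption~(C), which guarantees $\Gamma_H=\Gamma$, is what makes this applicable, since it ensures the $r$ approximate eigenvalues are precisely those enclosed.

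The core of the argument is then to show that both quantities inside the parentheses are of order $H^{2\gamma}$. The product term is immediate from the refined bounds of Lemma~\ref{l:aha}, namely $\|(A-A_H)|_{R(E)}\|_{X,X}\leq CH^\gamma$ and $\|(A^*-A_H^*)|_{R(E^*)}\|_{X,X}\leq CH^\gamma$, whose product is $CH^{2\gamma}$. For the consistency term I would exploit the Galerkin structure: since $\Pi_H$ is $a$-self-adjoint and $(I-\Pi_H)A\phi_j$ is $a$-orthogonal to $\XHLODl$, the term splits as
\[
a((A-A_H)\phi_j,\phi_i^*)=a((I-\Pi_H)A\phi_j,(I-\Pi_H)\phi_i^*),
\]
which by Cauchy--Schwarz is bounded by $\|(A-A_H)\phi_j\|_X\,\|(I-\Pi_H)\phi_i^*\|_X$. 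The first factor is $O(H^\gamma)$ directly because $\phi_j\in R(E)$, and the second factor will be shown to be $O(H^\gamma)$ by converting the raw projection error of the dual eigenfunction into an operator error.

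The step that needs genuine care — the \emph{main obstacle} — is exactly this last conversion for generalized rather than genuine eigenfunctions. For a true eigenvector one simply writes $\phi_i^*=\mu^{-1}A^*\phi_i^*$ (recall $\mu=\lambda^{-1}\neq0$), so that $(I-\Pi_H)\phi_i^*=\mu^{-1}(A^*-A_H^*)\phi_i^*=O(H^\gamma)$ by Lemma~\ref{l:aha}. For a member of a Jordan chain of length up to $\alpha$ the relevant relation is $\mu\phi_i^*=A^*\phi_i^*-\phi_{i-1}^*$, which expresses $(I-\Pi_H)\phi_i^*$ in terms of $(A^*-A_H^*)\phi_i^*$ and $(I-\Pi_H)\phi_{i-1}^*$; a short induction down the chain, all of whose links lie in the finite dimensional, $A^*$-invariant subspace $R(E^*)$, keeps every contribution of order $H^\gamma$, with a constant depending on $\mu$ and the chain structure but not on $H$. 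Consequently the consistency term is also $O(H^{2\gamma})$, the bracket in the Babuška--Osborn estimate is $O(H^{2\gamma})$, and raising to the power $1/\alpha$ gives $|\mu-\mu_H^j|\leq CH^{2\gamma/\alpha}$ for every $j=1,\dots,r$, as claimed.
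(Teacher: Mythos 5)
Your proposal is correct and follows the same skeleton as the paper's proof: both invoke the eigenvalue estimate of Theorem 7.3 in \cite{BaOs91}, bound the product term by $\|(A-A_H)|_{R(E)}\|_{X,X}\,\|(A^*-A^*_H)|_{R(E^*)}\|_{X,X}\leq CH^{2\gamma}$ directly from Lemma~\ref{l:aha}, and treat the consistency term $a((A-A_H)\phi_i,\phi_j^*)$ by Galerkin orthogonality (subtracting a discrete function in the right slot) followed by Cauchy--Schwarz. Where you genuinely diverge is the remaining sub-step, the approximation of the dual invariant subspace $R(E^*)$ by $\XHLODl$: the paper bounds $\inf_{z\in \XHLODl}\sup_{x\in R(E^*)}\|x-z\|_X\leq CH^\gamma$ by a second appeal to the LOD best-approximation result, Lemma~\ref{l:best} with $\delta_2=\gamma$, combined with the component estimate $\|\sqrt{\kappa}\nabla x_2\|\leq C\|\sqrt{\kappa}\nabla x_1\|$ on $R(E^*)$; you instead use the identity $A_H^*=\Pi_H A^*$ and the Jordan-chain relation $A^*\phi_i^*=\mu\phi_i^*+\phi_{i-1}^*$ to convert the projection error $(I-\Pi_H)\phi_i^*$ into the operator error $(A^*-A_H^*)\phi_i^*$ plus lower chain terms, recycling the restricted bound of Lemma~\ref{l:aha} by induction down the chain (legitimate since $\mu\neq 0$ and the chains have fixed finite length). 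Your variant is more self-contained at this point --- it works for any $a$-orthogonal Galerkin compression once the restricted operator bounds are known, and never touches the internal structure of the eigenfunctions again --- whereas the paper's route reuses the LOD-specific approximation machinery. One minor point: to invoke Theorem 7.3 of \cite{BaOs91} rigorously you should cite Lemma~\ref{l:efunc} (equality of dimensions and convergence of the gap between $R(E)$ and $R(E_H)$), which Assumption~(C) alone does not provide but which follows from Lemma~\ref{l:aha} exactly as in the paper; this is a citation gap, not a mathematical one, since everything you use already implies it.
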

\begin{proof}
Let $\{\phi_i\}_{i=1}^r$ span $R(E)$ and $\{\phi^*_i\}_{i=1}^r$ span $R(E^*)$, both normalized in $L^2(\Omega)$. Lemma \ref{l:efunc} is valid and therefore Theorem 7.3 in \cite{BaOs91} gives,
\begin{align*}
|\mu-\mu_H^j|^\alpha &\leq C\sum_{i,j=1}^r|a(A-A_H)\phi_i,\phi_j^*)|\\
&\qquad +
C\|(A-A_H)|_{R(E)}\|_{X,X}\|(A^*-A^*_H)|_{R(E^*)}\|_{X,X}.
\end{align*}

The second term has been bounded in Lemma \ref{l:aha}. The first term remains. Using Galerkin Orthogonality we can subtract any $y\in \XHLODl$ in the right slot. Using Lemmas \ref{l:aha} we get,
\begin{align*}
a((A-A_H)\phi_i,\phi_j^*)&\leq \|(A-A_H)|_{R(E)}\|_{X,X}\inf_{z\in \XHLODl}\sup_{x\in R(E^*)}\|x-z\|_X\\
&\leq CH^{\gamma}\inf_{z\in \XHLODl}\sup_{x\in R(E^*)}\|x-z\|_X.
\end{align*}
Here Lemma \ref{l:best} applies with $\delta_2=\gamma$. We also note that similar to the proof of the second part of Lemma \ref{l:aha} we have $\|\sqrt{\kappa}\nabla x_2\|\leq C\|\sqrt{\kappa}\nabla x_1\|$ for $x^j\in R(E^*)$ and therefore,
$$
\inf_{z\in \XHLOD}\sup_{x\in R(E^*)}\|x-z\|_X\leq CH^\gamma.
$$
The theorem follows.
\end{proof}

\begin{remark}
The theorem states a similar result as Theorem 8.3 in \cite{BaOs91} but for this specific method. Also a result similar to Theorem 8.2 in \cite{BaOs91} can be derived in the same way. It holds,
$$
\left|\mu^{-1}-\left(\frac{1}{r}\sum_{j=1}^{r}\mu^j_H\right)^{-1}\right|\leq C H^{2\gamma}.
$$
\end{remark}

\section{Numerical Experiments}\label{s:num}
In all subsequent experiments, we let $\Omega=[0,1]^2$ and we consider nested uniform rectangular triangulations (as depicted in Figure~\ref{fig:meshes}) on all scales.
\begin{figure}
\begin{center}
\includegraphics[height=.25\textwidth]{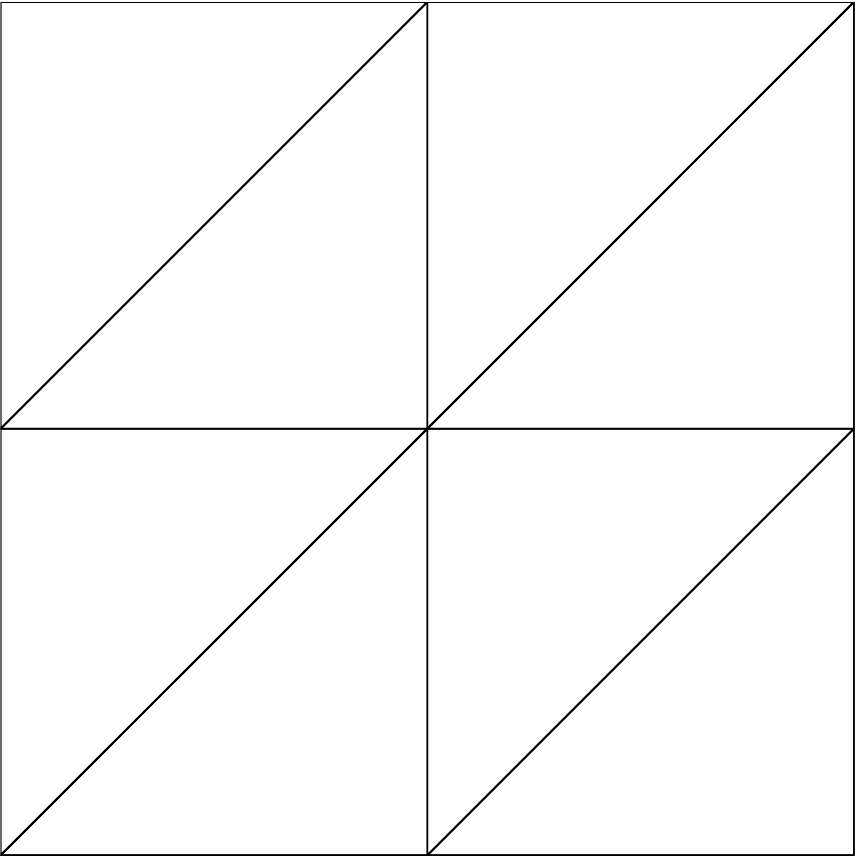}\hspace{.05\textwidth}
\includegraphics[height=.25\textwidth]{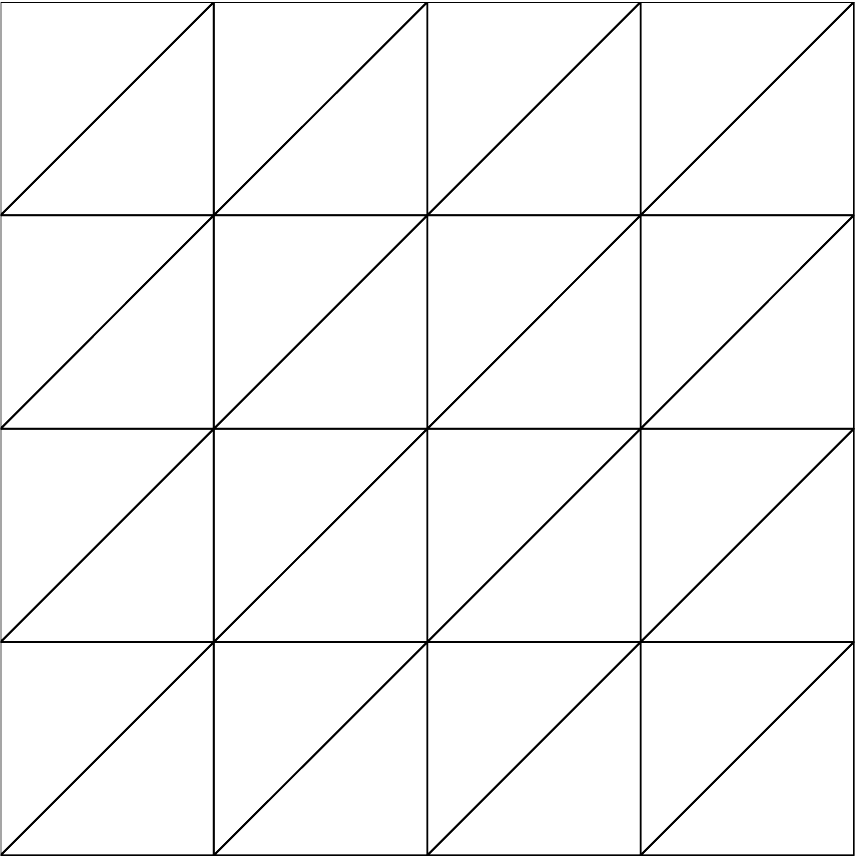}
\end{center}
\caption{Uniform triangulations of the unit square with $H=2^{-0.5},2^{-1.5}$ as they are used in the numerical experiments.}\label{fig:meshes}
\end{figure}
The fine reference scale will be fixed throughout by the choice $h=2^{-8.5}$. The coarse mesh size will typically varies, $H=2^{-1.5},2^{-2.5},2^{-3.5},
2^{-4.5},2^{-5.5}$. The corresponding finite element spaces are $V_H^{\text{FEM}}\subset V_h^{\text{FEM}}$. For each of these spaces we construct the corresponding LOD spaces $\VHLODl$ with a certain choice of the localization parameter $\ell$. Throughout the numerical experiments, we plot the relative error for the eight smallest (magnitude) eigenvalues. For the solution of the algebraic eigenvalue problems, we use the MATLAB built-in eigenvalue solver \texttt{eigs} with tolerance \texttt{1e-10}.

\subsection{Smooth mass-type damping}\label{ss:numexpa}
We consider damping by a modified mass matrix associated with the bilinear form
\begin{equation}\label{e:damp}
d(v,w)=\int_\Omega (1+sin(10x_1))v(x_1,x_2)\,w(x_1,x_2)\,d(x_1,x_2).
\end{equation}
Clearly,
$$|d(v,w-\IH w)|\leq CH^2\|\sqrt{\kappa}\nabla v\|\|\sqrt{\kappa}\nabla w\|,$$ i.e., Assumption (B) holds with $\gamma=2$.
\begin{figure}
 \begin{center}
   \includegraphics[height=.25\textwidth]{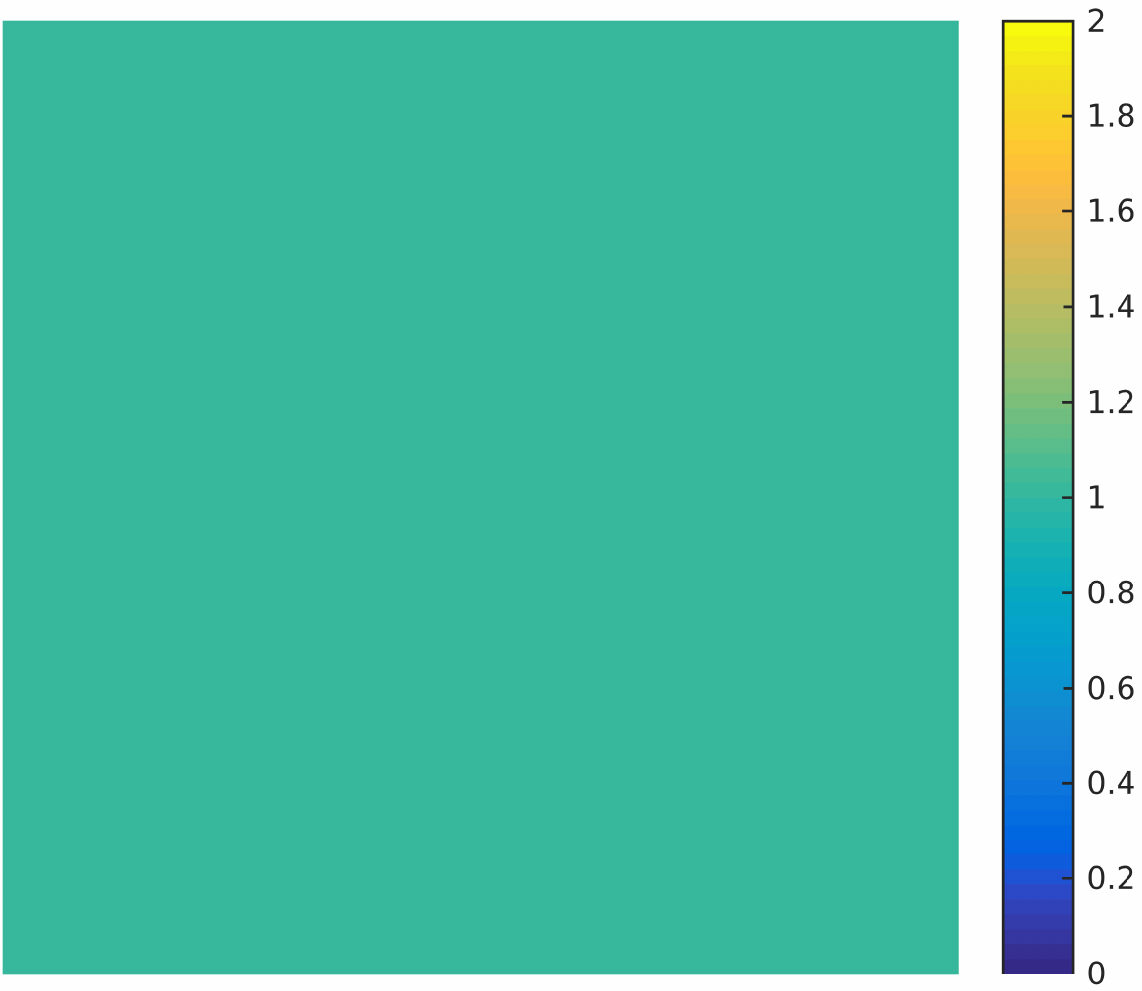}
   \includegraphics[height=.25\textwidth]{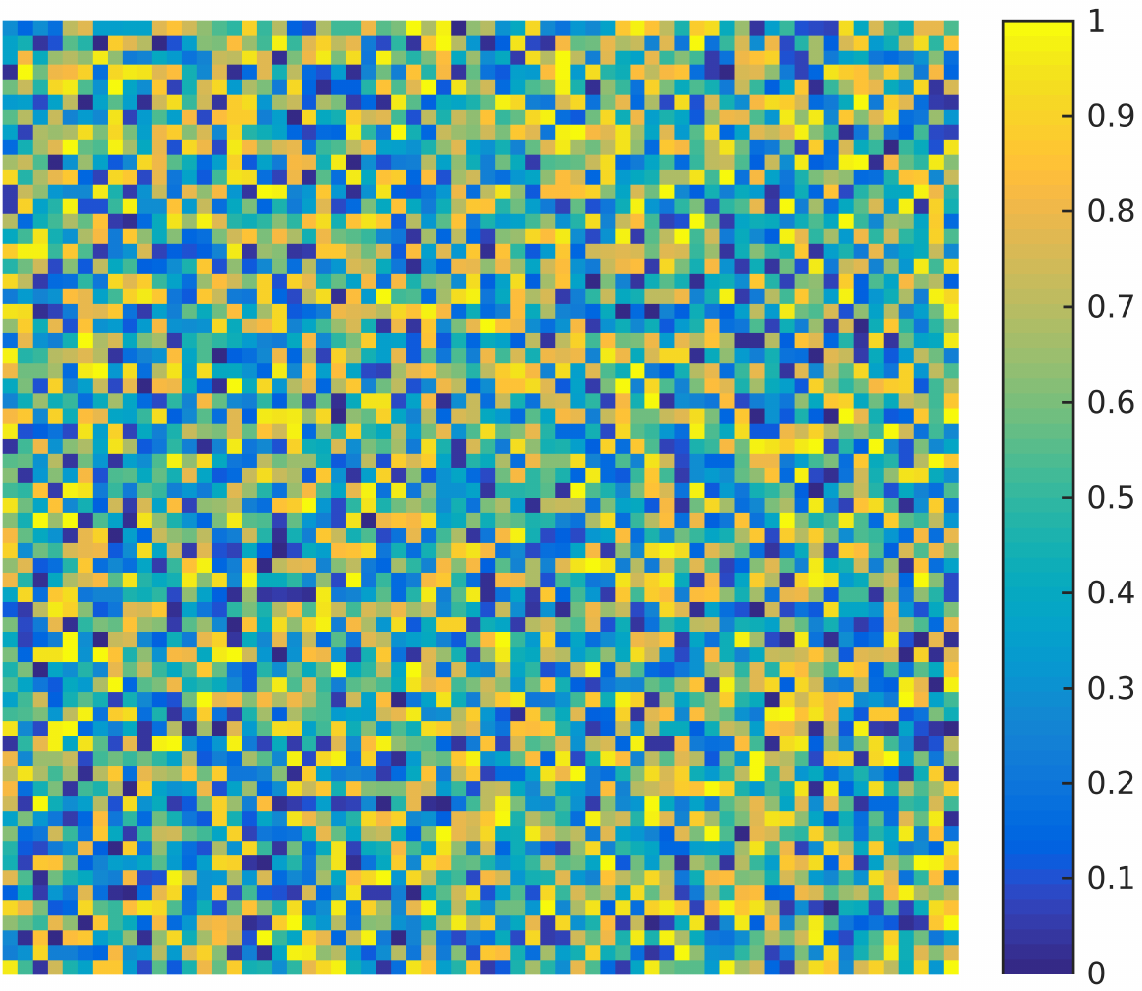}
   \includegraphics[height=.25\textwidth]{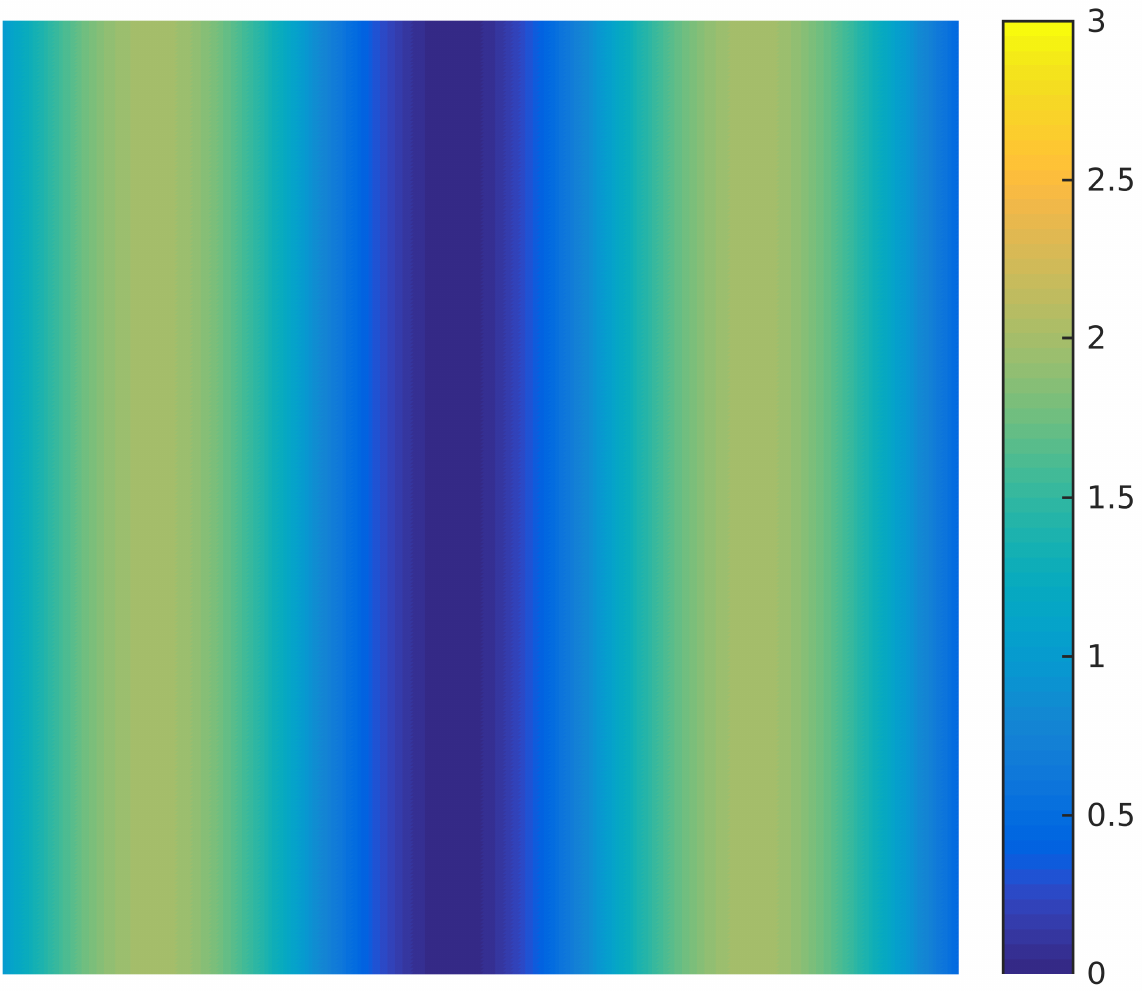}
\caption{Coefficients in the numerical experiments of Section~\ref{ss:numexpa}. Left: Constant diffusion coefficient. Middle: Rapidly varying diffusion diffusion. Right: Weight function for mass-type damping.}\label{fig:numexpadata}
  \end{center}
\end{figure}

\begin{figure}
 \begin{center}
   \includegraphics[width=.49\textwidth]{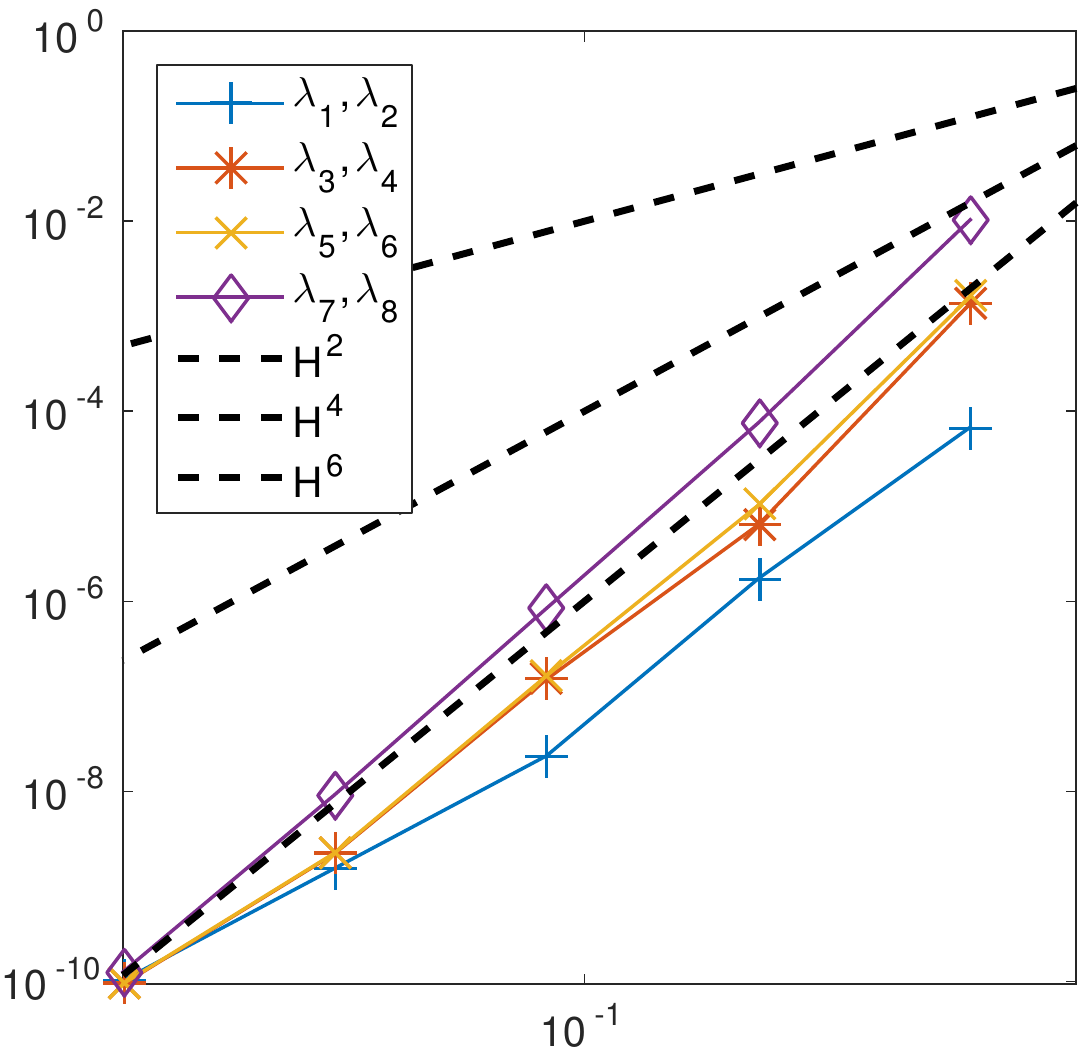}\vspace{.02\textwidth}
   \includegraphics[width=.49\textwidth]{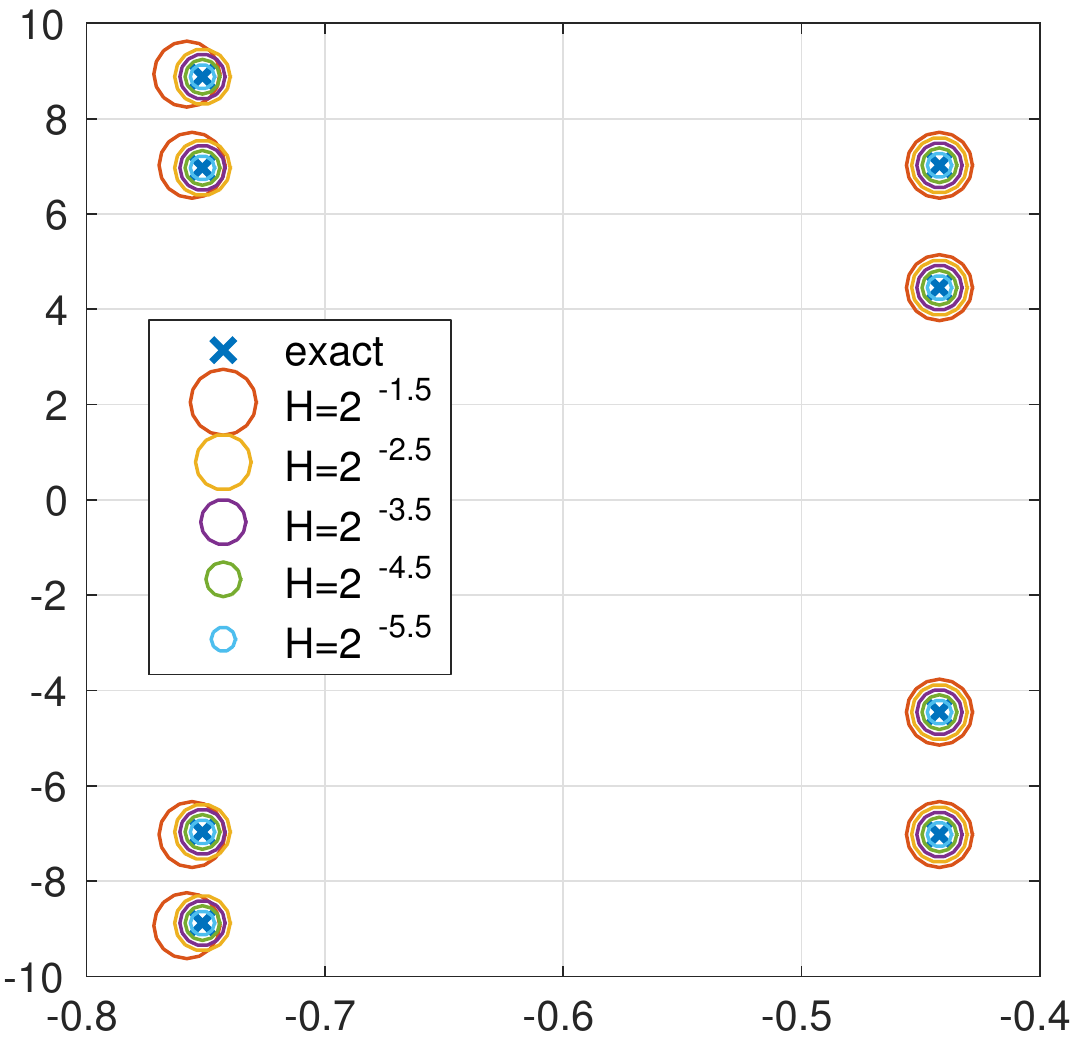}
\caption{First numerical experiment in Section~\ref{ss:numexpa} for constant diffusion and smooth damping weight depicted in Fig.~\ref{fig:numexpadata} (left and right). Left: Relative eigenvalue errors vs. coarse mesh size $H$ with localization parameter $\ell = \lceil 3\log(H^{-1})\rceil$. Right: Illustration of eigenvalue convergence as $H$ decreases.}\label{fig:numexp1}
  \end{center}
\end{figure}

To begin with, let $\kappa=1$. Figure~\ref{fig:numexp1} shows the convergence rate of the 8 smallest (complex conjugate) eigenvalues as a function of the coarse mesh size. We observe a convergence rate of $H^6$ which is more than we expect from the theory ($H^{2\gamma}=H^4$ for simple eigenvalues). This may be related to the high regularity of the underlying PDE eigenvalue problem which we do not take advantage of in the analysis.

In the second numerical example we let $\kappa$ be a piecewise constant function on a $64\times 64$ uniform grid depicted in Figure~\ref{fig:numexpadata}. In every square element we pick on value from the uniform distribution $U([0.003,1])$. This gives a deterministic rapidly varying diffusion coefficient with aspect ratio over $300$, see Figure \ref{fig:numexpadata} (middle). We keep the rest of the setup the same, i.e., $d$ is still chosen as in Equation~\eqref{e:damp}. In Figure \ref{fig:numexp2}, we see that we get the $H^4$ convergence as predicted by the theory. The same convergence rate was also detected for the corresponding linear eigenvalue problem in \cite{MaPe13}.
\begin{figure}
 \begin{center}
   \includegraphics[width=.49\textwidth]{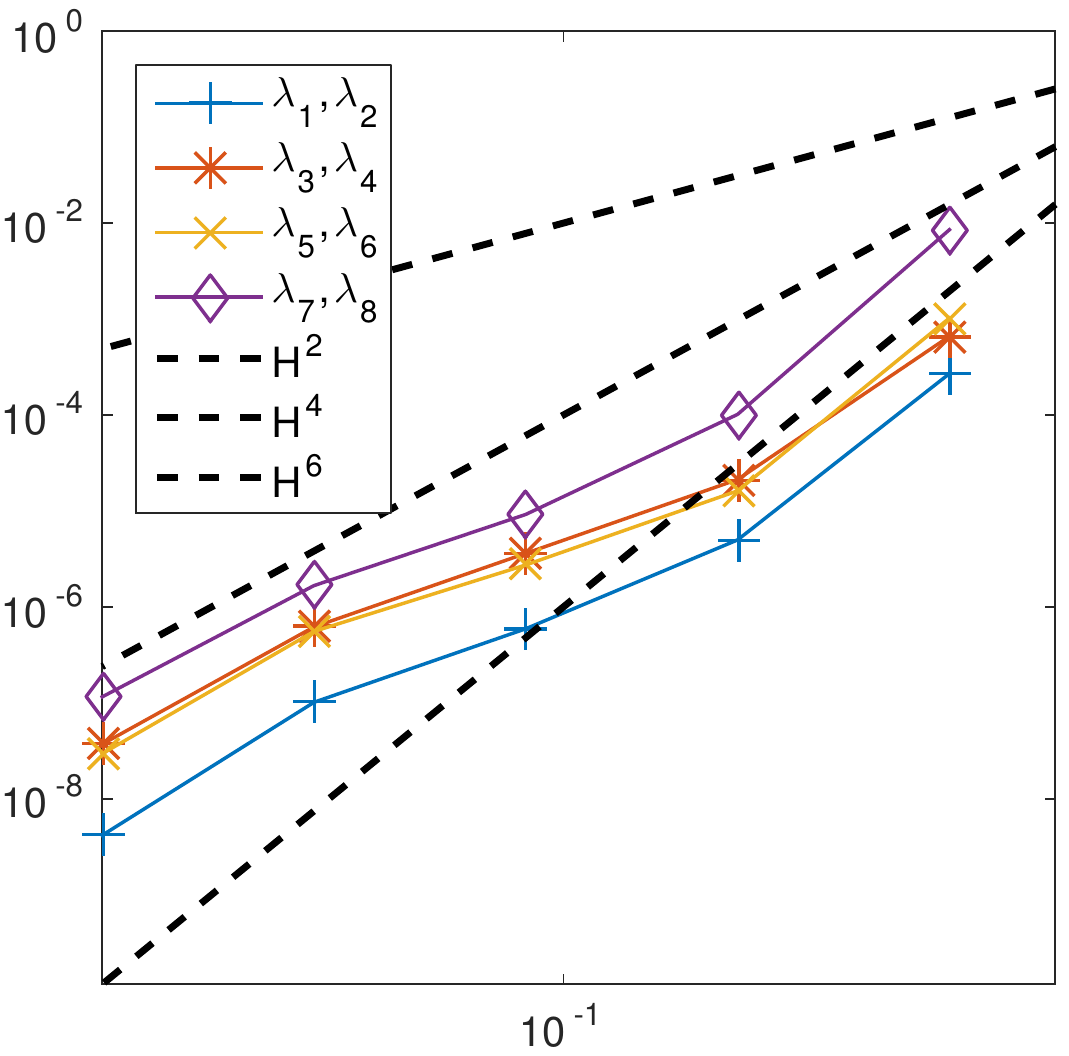}\vspace{.02\textwidth}
   \includegraphics[width=.49\textwidth]{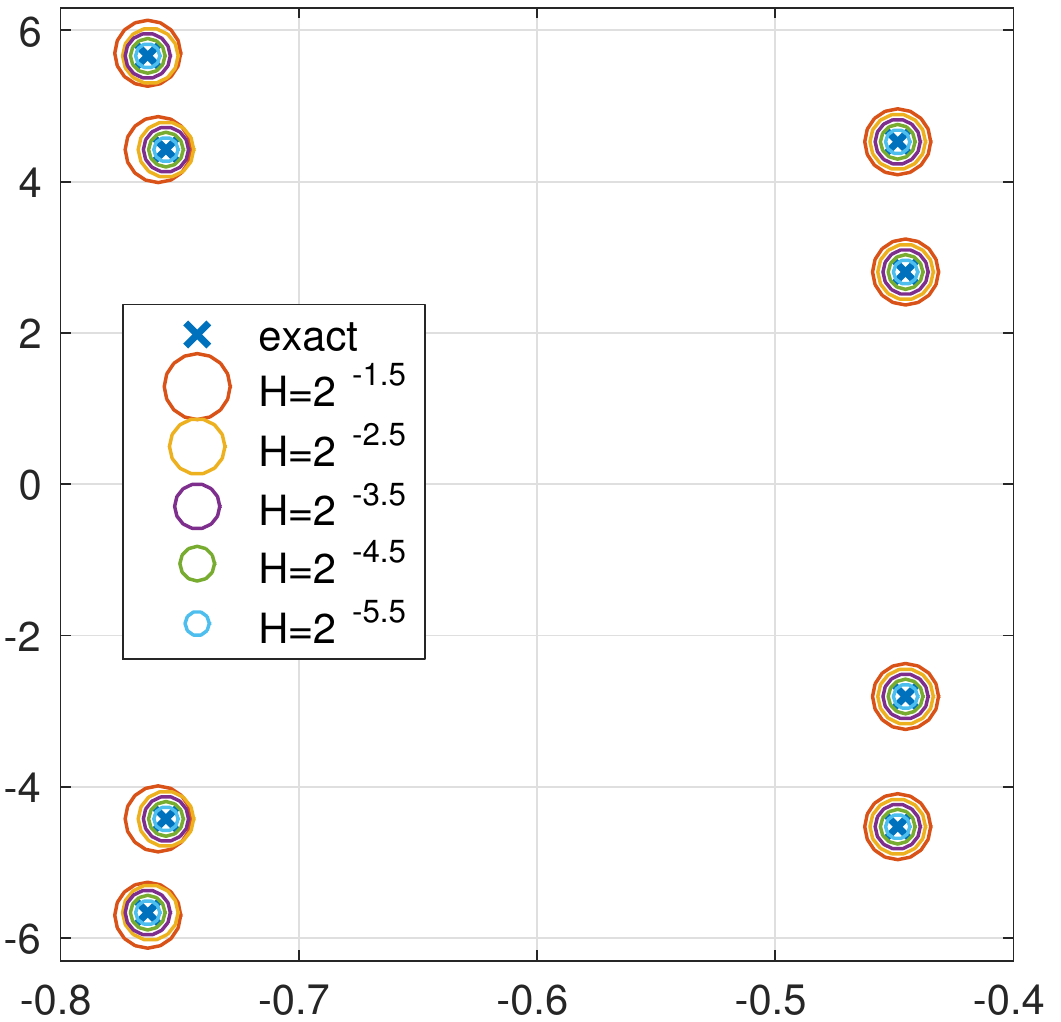}
\caption{Second numerical experiment in Section~\ref{ss:numexpa} for rough diffusion and smooth damping weight depicted in Fig.~\ref{fig:numexpadata} (middle and right). Left: Relative eigenvalue errors vs. coarse mesh size $H$ with localization parameter $\ell = \lceil 3\log(H^{-1})\rceil$. Right: Illustration of eigenvalue convergence as $H$ decreases.}\label{fig:numexp2}
  \end{center}
\end{figure}

\subsection{Discontinuous mass-type damping with composite material configuration}\label{ss:numexpb}
\begin{figure}
 \begin{center}
   \includegraphics[height=.25\textwidth]{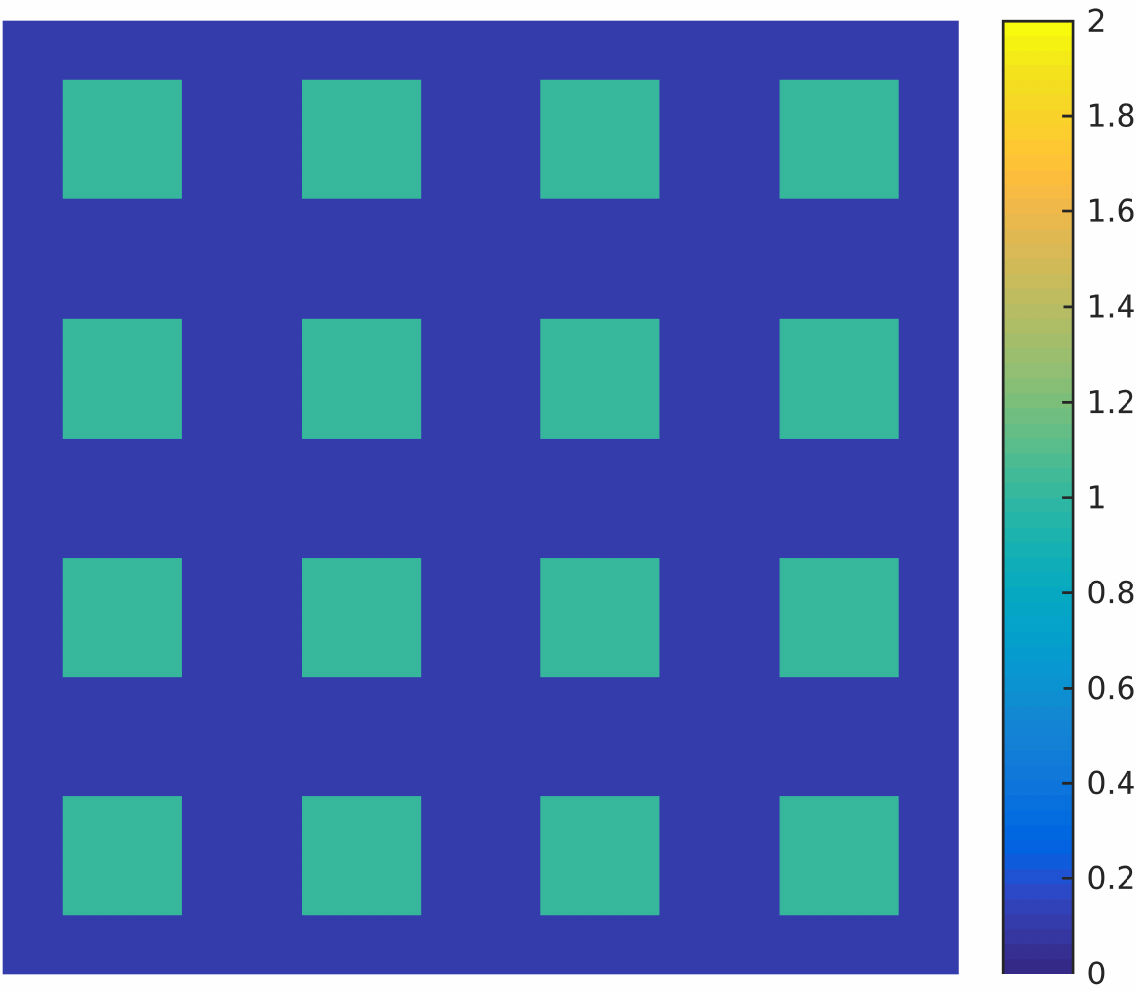}
   \includegraphics[height=.25\textwidth]{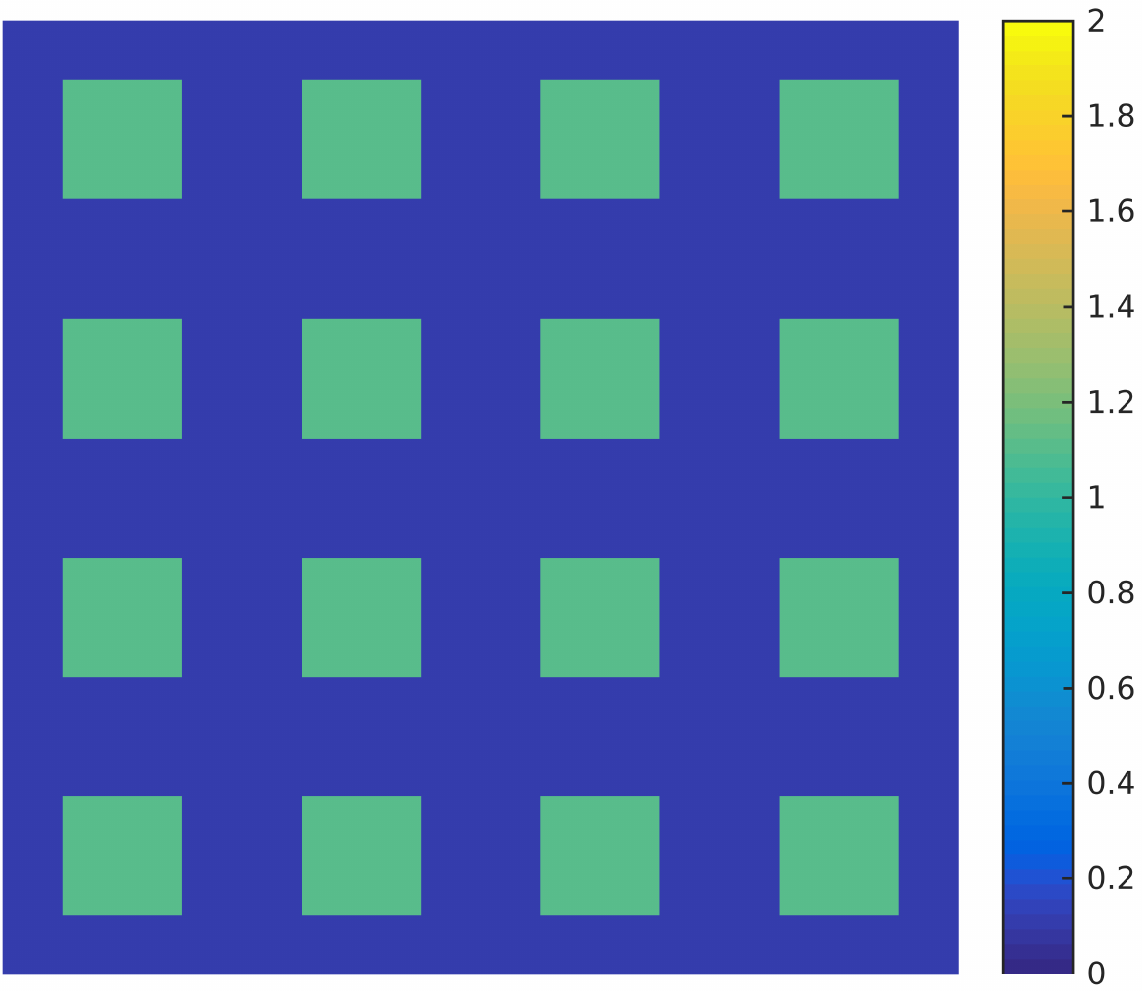}
   \includegraphics[height=.25\textwidth]{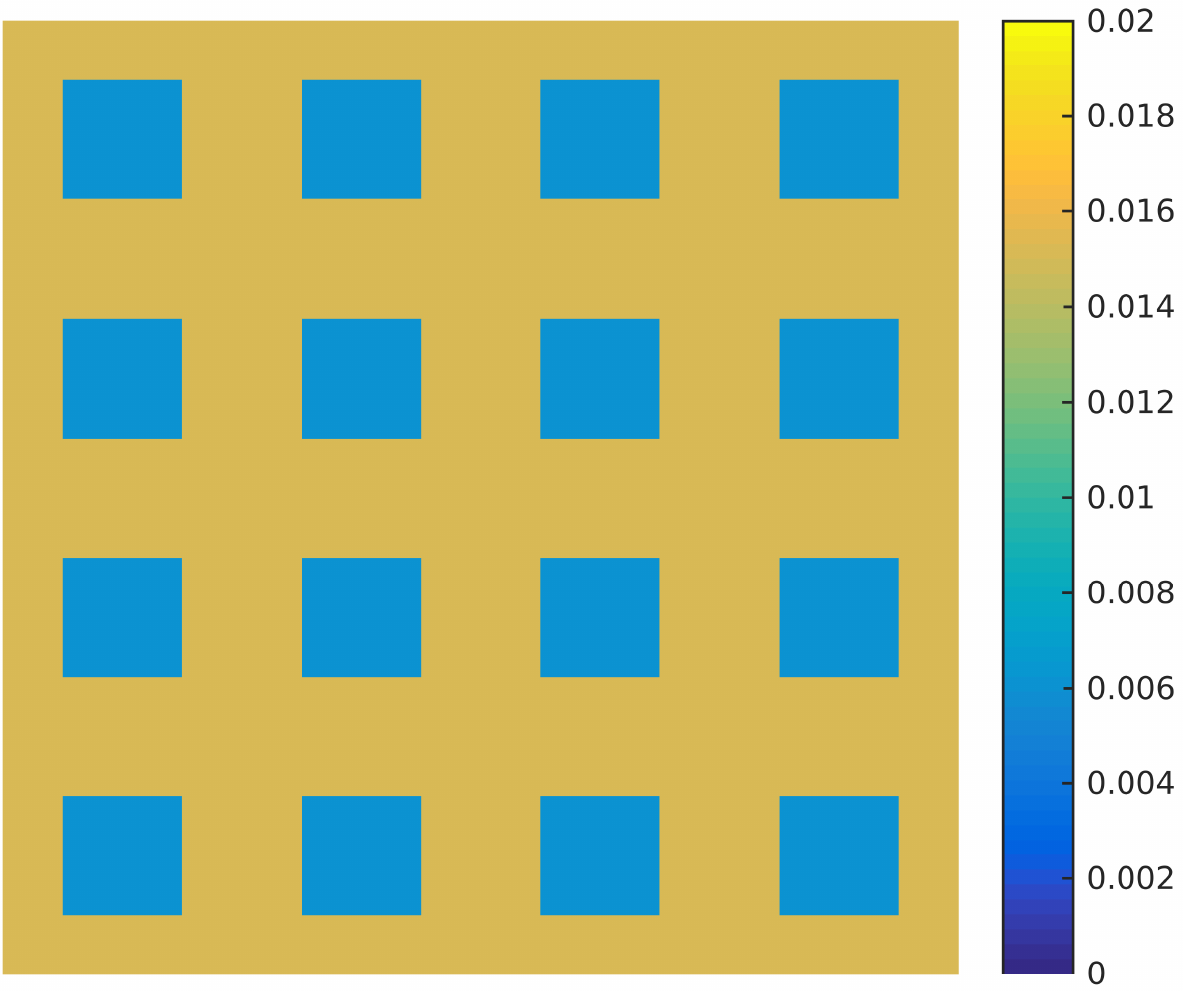}
\caption{Coefficients in the numerical experiments of Sections~\ref{ss:numexpb}--\ref{ss:numexpc}. Left: Discontinuous diffusion coefficient. Middle: Discontinuous weight function for mass-type damping. Right: Discontinuous weight function for stiffness-type damping.}\label{fig:numexpbdata}
  \end{center}
\end{figure}
\begin{figure}
 \begin{center}
   \includegraphics[width=.502\textwidth]{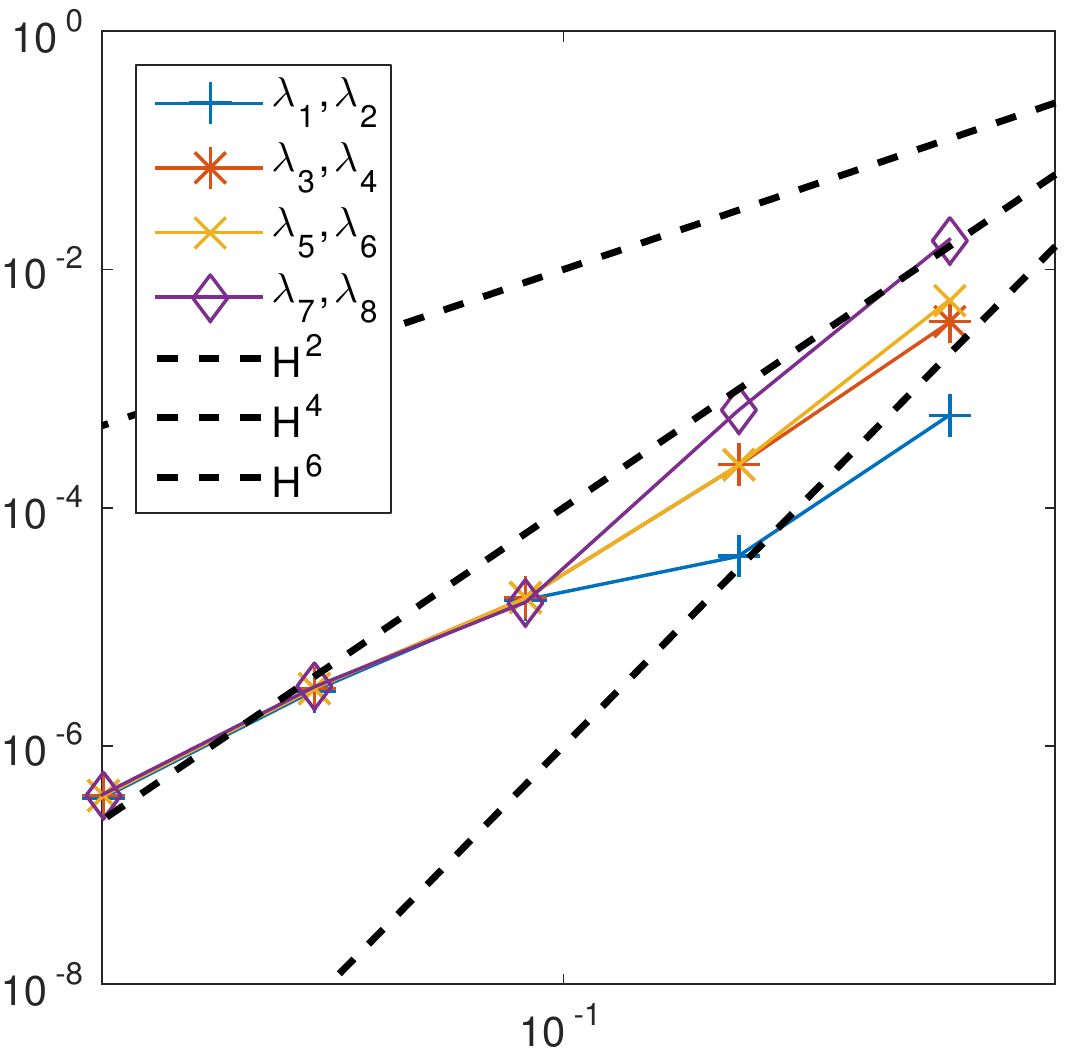}\vspace{.02\textwidth}
   \includegraphics[width=.48\textwidth]{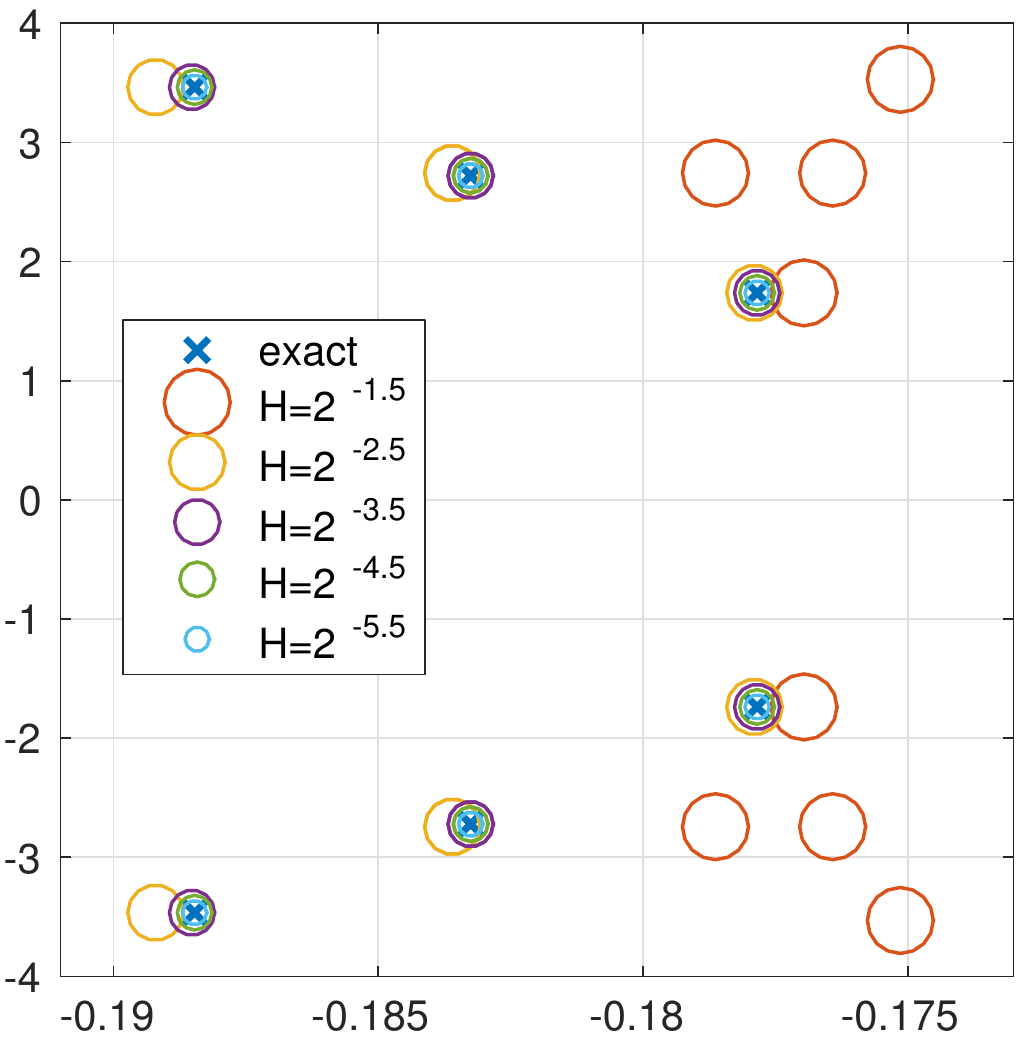}
\caption{Numerical experiment in Section~\ref{ss:numexpb} for discontinuous diffusion and discontinuous mass-type damping depicted in Fig.~\ref{fig:numexpbdata} (left and middle). Left: Relative eigenvalue errors vs. coarse mesh size $H$ with localization parameter $\ell = \lceil 3\log(H^{-1})\rceil$. Right: Illustration of eigenvalue convergence as $H$ decreases. Note that the eigenvalues $\lambda_3,\lambda_5$ (resp. $\lambda_4,\lambda_6$) are clustered and can hardly be distinguished in the plot.}\label{fig:numexp3}
  \end{center}
\end{figure}
We now consider a situation where $\kappa$ jumps between two values in the domain, namely,
$$
\kappa=\left\{\begin{array}{cl}1& \text{in }\Omega_1,\\
\alpha& \text{in }\Omega\setminus\Omega_1.\end{array}\right.
$$
This models a composite with two different materials. We pick the domain $\Omega$ to be the union of a periodic array of square inclusion as depicted in Figure~\ref{fig:numexp3} (left). We assume that the damping has the same structure represented by the coefficient
$$
\tilde{\kappa}=\left\{\begin{array}{cl}\beta& \text{in }\Omega_1,\\
\mu& \text{in }\Omega\setminus\Omega_1,\end{array}\right.
$$
and we  study mass type damping represented by the $\tilde{\kappa}$-weighted $L^2$ scalar product
$$
d(v,w)=(\tilde{\kappa}v,w).
$$
Note that in this case,
$$d(v,w-\IH w)\leq CH \|v\|\|\sqrt{\kappa}\nabla w\|,$$
i.e., $\gamma=1$. Hence, our theory predicts only convergence of order $H^2$. We can not immediately get a higher rate since $\tilde{\kappa}$ is not differentiable.
We let $\alpha=\mu=0.1$ and $\beta=1.1$. This is an academic choice used to test the numerical methods presented in the paper.
Figure \ref{fig:numexp3} shows the convergence of the error as the dimension of the coarse multiscale space depending on the coarse mesh size. The rate increases with the size of the system and we still observe $H^4$ convergence. 

In the error analysis, we have assumed that the eigenvalues are isolated and that the approximate eigenvalues are found in a neighborhood of the exact ones. This was  formulated in Assumption (C) above. While this assumption could be verified a posteriori in the previous experiments with semi-simple and well-separated eigenvalues (cf. Figures~\ref{fig:numexp1}(right), \ref{fig:numexp2}(right)), we now observe clustered eigenvalues and the violation of Assumption (C) on the coarsest meshes (cf. Figure~\eqref{fig:numexp3}(right)). Still, the quality of the approximate spectrum is comparable to the previous experiments.

\subsection{Discontinuous stiffness-type damping}\label{ss:numexpc}
\begin{figure}
 \begin{center}
   \includegraphics[width=.502\textwidth]{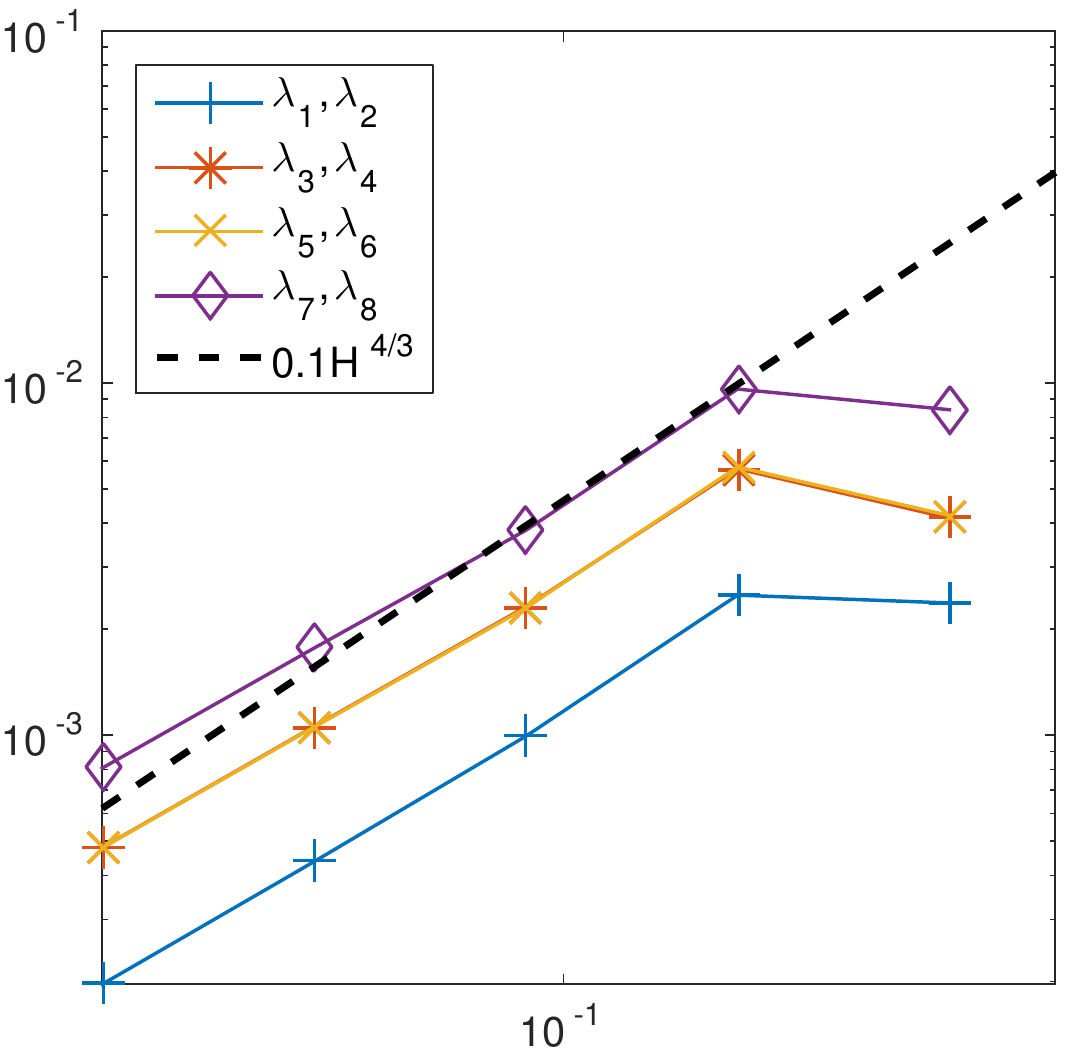}\vspace{.02\textwidth}
   \includegraphics[width=.48\textwidth]{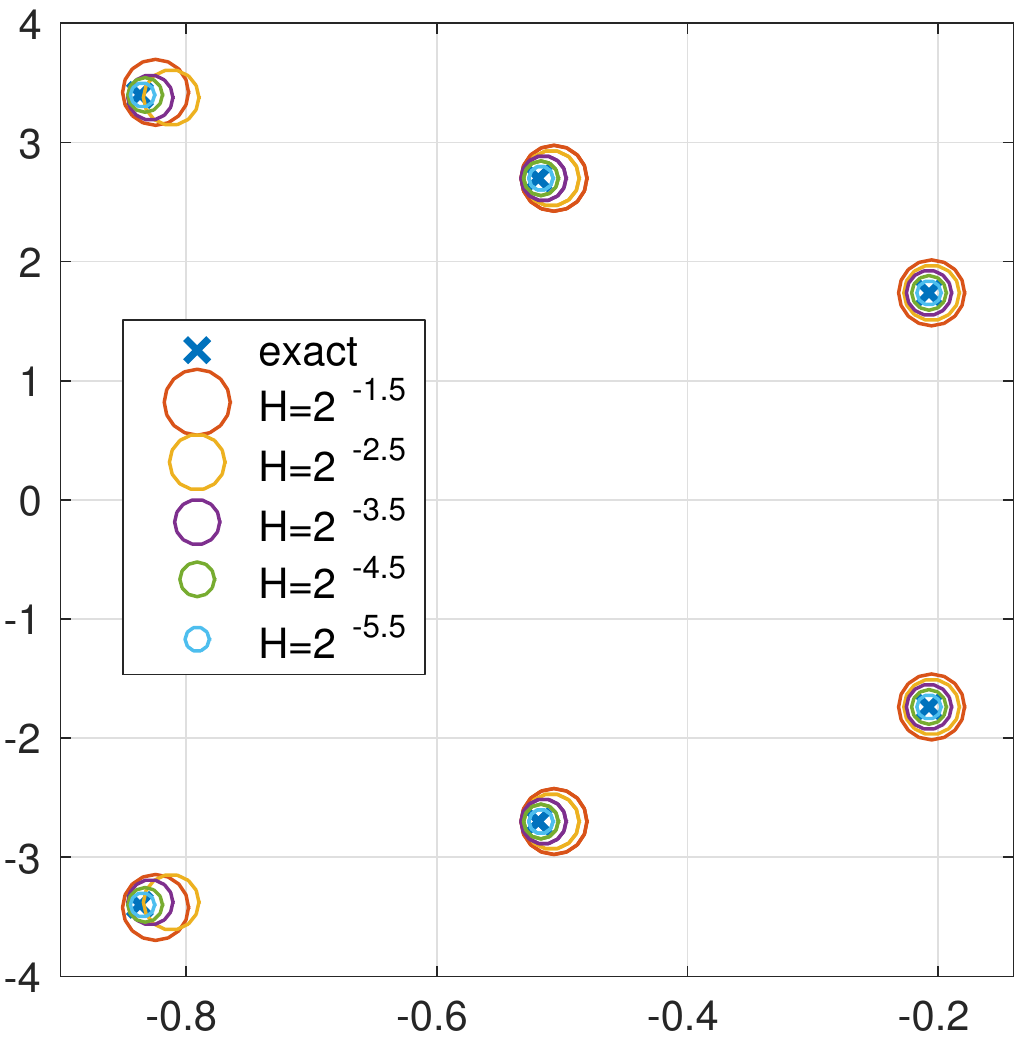}
\caption{Numerical experiment in Section~\ref{ss:numexpc} for discontinuous diffusion and discontinuous mass-type damping depicted in Fig.~\ref{fig:numexpbdata} (left and right). Left: Relative eigenvalue errors vs. coarse mesh size $H$ with localization parameter $\ell = \lceil 2\log(H^{-1})\rceil$. Right: Illustration of eigenvalue convergence as $H$ decreases. Note that the eigenvalues $\lambda_3,\lambda_5$ (resp. $\lambda_4,\lambda_6$) are clustered and can hardly be distinguished in the plot.}\label{fig:numexp4}
  \end{center}
\end{figure}
Now we consider stiffness type damping,
$$d(v,w)=(\tilde{\kappa}\nabla v,\nabla w).$$
We let $\alpha=0.1$, $\mu=0.015$, and $\beta=0.006$. In Figure \ref{fig:numexp4}, we see the convergence of the error as the coarse mesh size decreases. The parameter $\gamma$ in Assumption (B) is equal to zero for discontinuous $\tilde{\kappa}$ so that the theory does not predict any rate of convergence. Still, we detect $\mathcal{O}(H^{4/3})$ convergence roughly with pre-asymptotic effects only on the coarsest grid. Again, this may be related to the regularity of the underlying PDE eigenvalue problem which we do not take advantage of in the analysis.

\section{Conclusion}\label{s:conclusion}
We present an efficient numerical procedure applicable for a class of quadratic eigenvalue problems, discretized using conforming P1 finite elements. The main idea is to construct a low dimensional subspace of the finite element space that capture crucial features of the problem. In particular we consider problems with rapid data variation, modeling for instance composite materials. The numerical results are very promising. High convergence rates without pre-asymptotic regime are detected. This indicates that already low dimensional GFEM spaces give sufficient accuracy in the eigenvalue approximation. Therefore, the task of solving a very large quadratic eigenvalue problem can be replaced by the task of solving of many localized independent linear Poisson type problems followed by one small quadratic eigenvalue problem. This conclusion was also made for linear eigenvalue problems in \cite{MaPe13} and eigenvalue problems involving a non-linearity in the eigenfunction \cite{HeMaPe13}.

In the error analysis, we prove convergence with rate for the eigenvalues and eigenspaces. The result are based on classical works for non-symmetric eigenvalue problems nicely summarized in \cite{BaOs91} and recent work of the authors for linear eigenvalue problems. The results rely on qualitative assumptions on the size of $H$ and the distribution of the discrete and exact eigenvalues. Without explicit knowledge on the structure of the damping, it seems to be difficult to avoid them or replace them with more quantitative assumptions. However we have not yet observed any problems in the numerical experiments. A challenge for future research would be to rigorously justify the numerical performance in the targeted pre-asymptotic regime, at least for practically relevant classes of damping.

\end{document}